\documentclass[11pt,leqno]{amsart}

\usepackage[utf8x]{inputenc}
\usepackage[T1]{fontenc}
\usepackage[english]{babel}

\usepackage[margin=1.1in]{geometry}

\usepackage{amsmath,amssymb,amsthm}
\usepackage{mathabx}
\usepackage{yfonts}
\usepackage{mathrsfs}
\usepackage{faktor}
\usepackage[lite,initials, alphabetic]{amsrefs}
\usepackage[dvipsnames]{xcolor}
\usepackage[colorlinks=true, linkcolor=MidnightBlue, citecolor=OliveGreen]{hyperref}
\usepackage{comment}

\usepackage{graphicx}
\usepackage{tikz}
\usepackage{tikz-cd}
\usetikzlibrary{arrows, automata, positioning}

\theoremstyle{definition}
\newtheorem{defn}{{\bf Definition}}[section]

\newtheorem{rmk}[defn]{{\bf Remark}}
\newtheorem{question}[defn]{{\bf Question}}
\theoremstyle{theorem}
\newtheorem{lemma}[defn]{{\bf Lemma}}
\newtheorem{theorem}[defn]{{\bf Theorem}}
\newtheorem{coro}[defn]{{\bf Corollary}}
\newtheorem{prop}[defn]{{\bf Proposition}}

\DeclareMathOperator{\Sym}{\mathrm{Sym}}
\DeclareMathOperator{\Aut}{Aut}

\DeclareMathOperator{\st}{st}
\DeclareMathOperator{\St}{St}
\DeclareMathOperator{\rst}{rist}
\DeclareMathOperator{\Rst}{Rist}

\newcommand{\LD}{\langle}
\newcommand{\RD}{\rangle}

\numberwithin{equation}{section}

\DeclareMathOperator{\bp}{Bas}
\newcommand{\cref}[2]{\hyperref[#2]{#1~\ref*{#2}}}

\title{The Basilica group of the Grigorchuk--Erschler group}

\author[K. Rajeev]{Karthika Rajeev} 
\address{Karthika Rajeev: Fakult\"at f\"ur Mathematik,
Universit\"at Bielefeld,
33501 Bielefeld, Germany}
\email{krajeev@math.uni-bielefeld.de}

\author[A. Thillaisundaram]{Anitha Thillaisundaram} 
\address{Anitha Thillaisundaram: Centre for Mathematical Sciences, Lund University,  
223 62 Lund, Sweden}
\email{anitha.thillaisundaram@math.lu.se}

\date{\today}

\thanks{This research was supported by the Deutsche Forschungsgemeinschaft (DFG, German Research Foundation) – Project-ID 491392403 – TRR 358  and by the GS Magnuson Foundation, grant MG2023-0043. The second author thanks Bielefeld University for their hospitality.}

\keywords{Groups acting on rooted trees, iterated monodromy groups, weakly branch groups, maximal subgroups}

\subjclass[2020]{Primary  20E08;  Secondary 20E28}

\begin{document}

\maketitle

\begin{abstract}
Francoeur and Garrido recently provided the first explicit examples of finitely generated branch groups with maximal subgroups of infinite index, where one of their examples is the Grigorchuk--Erschler group. Here, using the Basilica operation constructed by Petschick and Rajeev, we show that the second Basilica group of the Grigorchuk--Erschler group also has maximal subgroups of infinite index.
\end{abstract}

\section{Introduction}

Branch groups form a  well-studied subfamily of groups acting on rooted trees, due to the tighter structure and useful properties that branch groups satisfy. Roughly speaking, a branch group has a subnormal subgroup structure that is similar to the full automorphism group of the rooted tree. From the first time branch groups were considered, they have provided explicit examples of groups with interesting properties, that have in turn been used to answer several important questions, such as the Burnside Problem, the Mahlon Day Problem, to name but a few. They have also shown to play a key role in the theory of infinite groups, specifically for just infinite groups, and in recent times one has seen connections between  branch groups and other areas of mathematics, such as dynamics, algebraic geometry, and cryptography. 

One particular aspect of branch groups that has attracted attention is the study of their maximal subgroups. Prompted by the work of Pervova~\cite{Per00}, who showed that some early examples of finitely generated branch groups only had maximal subgroups of finite index, Grigorchuk asked about the existence of finitely generated branch groups with maximal subgroups of infinite index. This was answered positively by Bondarenko~\cite{Bondarenko} in 2010, though his construction was not explicit. The first explicit examples of finitely generated branch groups with maximal subgroups of infinite index were provided by Francoeur and Garrido~\cite{FG}, who showed that the non-torsion \u{S}uni\'{c} groups acting on the binary rooted tree have maximal subgroups of infinite index. Until recently, these remained the only examples of such groups. A new and interesting class of examples have been provided by Garciarena and Petschick (see \cite{GP}), which are of a different flavour to the  \u{S}uni\'{c} groups, and thus also exhibit different and interesting properties, such as being torsion. The techniques in \cite{GP} do not apply to branch groups with generators where their definition varies depending on the level of the tree, such as the \u{S}uni\'{c} groups. In this paper, we provide another explicit example (as part of a more general approach)  of a finitely generated branch group with maximal subgroups of infinite index, that is similar to the \u{S}uni\'{c} groups.

We will exclusively be interested in branch groups acting on the binary rooted tree, which we denote by $T$. The Grigorchuk--Erschler group $G$ is a $3$-generated group acting on $T$. Writing $a$ for the automorphism of $T$ that swaps the two maximal subtrees, the generators of $G$ are $a$ together with $b_0$ and $b_1$, the latter two which are defined recursively as follows:
\begin{align*}
   b_0= (b_1,1), && b_1=(b_0,a),
\end{align*}
where  $(x,y)$ represents the independent action on the two maximal subtrees, where $x,y\in \Aut T$. The Grigorchuk--Erschler group is one out of the family of Grigorchuk groups of intermediate growth, introduced in~\cite{Grigorchuk}, and particularly good bounds were given by Erschler~\cite{Erschler} for the growth function. The Grigorchuk-Erschler group is also the only self-similar group that is both in the family of Grigorchuk groups from~\cite{Grigorchuk} and in the family of \u{S}uni\'{c} groups~\cite{Sunic}. The Grigorchuk-Erschler group has also featured in some recent work concerning iterated monodromy groups~\cite{Nekrashevych}.

Petschick and Rajeev~\cite{PR} introduced a general construction, inspired by  the Basilica group which was in turn introduced by Grigorchuk and \.{Z}uk in \cite{GZ01} and \cite{GZ02}. The construction in~\cite{PR} works for more general regular rooted trees, but here we recall it only for the binary rooted tree. Let $s \geq 2$ be an integer and let $G$ be a subgroup of the automorphism group $\Aut T$ of the binary rooted tree~$T$. The $s$th Basilica group of $G$ is given by $\bp_{s}(G) = \LD\{\beta^s_i(g)\mid g \in G, \, i \in \{0,1,\dots, s-1\}\}\,\RD$, where $\beta_i^s: \Aut T \rightarrow \Aut T$ are monomorphisms given by
\begin{align*}
    \beta^s_i(g) &= (\beta^s_{i-1}(g), 1) &\text{ for } i \in \{1,\dots,s-1\},\\
		\beta^s_0(g) &=  (\beta^s_{s-1}(g|_0), \beta^s_{s-1}(g|_{1}))g|^\epsilon,
\end{align*}
 where $g|_x$ is the restriction of~$g$ to the subtree rooted at a first-level vertex $x$, and  $g|^\epsilon$ is the local action of the element~$g$ at the root of~$T$. That is, since $T$ is the binary rooted tree, here $g|^\epsilon\in C_2$. For more information on the Basilica operation, we refer the reader to~\cite{PR}.
 
In this paper, we use the Basilica operation on the Grigorchuk--Erschler group to produce a new explicit example of a finitely generated branch group with maximal subgroups of infinite index.

\begin{theorem}
    \label{thm:main} The second Basilica group of the Grigorchuk-Erschler group has maximal subgroups of infinite index.
\end{theorem}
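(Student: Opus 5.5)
The plan is to use the Francoeur--Garrido criterion. Let $B=\bp_2(G)$, with $G$ the Grigorchuk--Erschler group. We look for a proper subgroup $H<B$ that is not contained in any maximal subgroup of finite index. Since $B$ is finitely generated (by the six elements $\beta^2_i(g)$ with $i\in\{0,1\}$ and $g\in\{a,b_0,b_1\}$), Zorn's lemma places $H$ inside some maximal subgroup $M$. By the choice of $H$, this $M$ must have infinite index, which is exactly \cref{Theorem}{thm:main}.

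\textbf{Step 1: structure of $B$.} From the results of \cite{PR} applied to $G$, I would extract the following facts.
\begin{itemize}
\item $B$ is self-similar.
\item The sections at the second level recover $B$ from the generators: $\beta^2_0(g)|_{0}=\beta^2_1(g|_0)$ and $\beta^2_1(g)|_0=\beta^2_0(g)$.
\item $B$ is (weakly) branch over an explicit subgroup $K$, for instance a suitable commutator subgroup, which contains a level stabiliser.
\end{itemize}

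\textbf{Step 2: maximal subgroups of finite index are normal of index $2$.} I would show that every finite-index subgroup of $B$ contains some level stabiliser $\St_B(n)$, that is, a congruence-type property. I expect to get this from $K$ having finite index together with $\St_B(n)\le K'$-type inclusions, following the standard Grigorchuk-type argument. It then follows that every finite quotient of $B$ is a $2$-group, because $B/\St_B(n)$ embeds in the Sylow $2$-subgroup of $\Sym(2^n)$. A maximal subgroup of finite index is therefore normal of index $2$ and contains the Frattini-type subgroup $\Phi=[B,B]B^2$. I would compute $B/\Phi$, an elementary abelian $2$-group, from the generators by evaluating them against suitable homomorphisms to $C_2$, in the manner of the abelianisation computations of \cite{PR}.

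\textbf{Step 3: build $H$.} It then suffices to find a proper subgroup $H$ with $H\Phi=B$. Modelled on the subgroup Francoeur and Garrido use for $G$, I would take $H$ generated by $\beta^2_0(a)$, $\beta^2_1(a)$, and products of the $\beta^2_i(b_j)$ with conjugates or with $a$-type generators. These products are chosen so that they still span $B/\Phi$ but "mix" the two Basilica coordinates in a way that is not stable under the section maps.

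The main obstacle is proving $H\neq B$. I would argue by contradiction, assuming some fixed generator, say $\beta^2_1(b_0)$, lies in $H$, and taking a shortest expression for it in the generators of $H$. Projecting to the sections at the first two levels, and using the recursions $b_0=(b_1,1)$, $b_1=(b_0,a)$ together with the definition of $\beta^2_i$, I aim to show that the sections of $H$ lie in a subgroup of the same shape as $H$, with an index shift between $b_0$ and $b_1$. The word length should strictly decrease, which is the usual contraction argument. Descending, one reaches a short word equal to a generator outside $H$, or shows that $H$ lies in a proper subgroup defined by an invariant such as a parity condition propagated along levels. If the length argument fails, I would first try to find an explicit level-dependent invariant, an infinite sequence of homomorphisms on $\St_B(n)$ that vanish on $H$ but not on the chosen generator.
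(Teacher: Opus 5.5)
Your reduction is sound and is the one the paper uses. Zorn's lemma plus the congruence subgroup property (so finite quotients are $2$-groups and finite-index maximal subgroups are normal of index $2$, containing $\Phi=B'B^2$) reduces the theorem to finding a proper $H$ with $H\Phi=B$, i.e.\ a proper dense subgroup. You only sketch the CSP; in the paper it needs $\St_B(n)$ computed up to $n=6$ and the check $B''\ge \St_B(6)\times 1\times 1\times 1$, but the strategy is yours.

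The gap is Step 3, which is the entire content of the theorem. You never specify $H$, and the tools you propose for proving $H\neq B$ cannot work for a dense subgroup. If $H\Phi=B$, then $\St_H(1)$ is dense in $\St_B(1)$. Since $g\mapsto g|_0$ is a continuous surjection $\St_B(1)\to B$ (strong fractality), the sections of $\St_H(1)$ at $0$ form a dense subgroup of $B$. For instance, if $H$ contains $\alpha$ and the $c_i$, then $\alpha=(a,1)$, $c_3=(c_0,\alpha)$ and $c_i=(c_{i+1},1)$ already put $a,c_0,\dots,c_3$ into them. So there is no proper ``subgroup of the same shape'' for a contraction argument to descend into. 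Your fallback fails for the same reason: a homomorphism $\St_B(n)\to C_2$ vanishing on $H\cap\St_B(n)$ has open kernel containing a dense subgroup of $\St_B(n)$, so it vanishes on all of $\St_B(n)$. Hence no parity condition along levels can separate $H$ from $B$; properness must be witnessed by something invisible to finite quotients.

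The paper's witness is the Schreier graph of the orbit of the boundary point $\overline{0}=000\cdots$. Set $d=c_0c_1c_2c_3=(d,\alpha)$ and $\widetilde H(q)=\langle \alpha,(ad)^q,c_0,c_1,c_2,c_3\rangle$ for odd $q\ge 3$. Like Francoeur--Garrido's $\langle (ab)^q,b_0,b_1\rangle$, this keeps the $b$-type generators and replaces $a$ by an odd power of an infinite-order element. That is the opposite of your plan to keep $\beta_0(a),\beta_1(a)$. Odd powers of $ac_i$ do not help, since $ac_1$ has order $4$ and $ac_3$ has order $8$, so $\langle (ac_i)^q\rangle=\langle ac_i\rangle$.

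Density is immediate, because $B=\langle \alpha,ad,c_0,\dots,c_3\rangle$. For properness, note that every element of $\langle c_0,\dots,c_3\rangle$ acts at each point either trivially or as $d$, so orbits are $\langle a,\alpha,d\rangle$-orbits. The orbital graph of $\overline{0}$ is then a line made of two half-lines, $\mathfrak{T}_0$ (for $\langle a,d\rangle$) and $\mathfrak{T}_1$ (for $\langle \alpha,d\rangle$), glued at $\overline{0}$, with only finite pieces hanging off other vertices. The map $n\mapsto (ad)^n\cdot\overline{0}$ identifies $\mathbb{Z}$ with $\mathfrak{T}_0$. One checks that the $\widetilde H(q)$-orbit of $\overline{0}$ meets $\mathfrak{T}_0$ only in $\{(ad)^{qk}\cdot\overline{0}\mid k\in\mathbb{Z}\}$. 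This misses $a\cdot\overline{0}=(ad)\cdot\overline{0}$, so $a\notin \widetilde H(q)$. Without a concrete $H$ and an infinitary argument of this kind, your proposal does not prove the theorem.
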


\noindent We comment that the theorem should also hold for the $s$th Basilica group of the Grigorchuk-Erschler group, but due to the notational complexity, we only consider the case $s=2$ here.

Our proof of the above result uses the well-established approach by Pervova~\cite{Per00}, which was developed by Francoeur~\cite{Fra20}, which is to establish the existence of proper subgroups that are dense with respect to the profinite topology. The end of our proof is analogous to the strategy taken in~\cite{FG}, which is to consider the orbital graph of the action of the group on the boundary of the binary rooted tree.

We also establish some basic properties of $B$, such as having the congruence subgroup property and being branch over its derived subgroup; cf.  Proposition~\ref{prop:csp}   and Lemma~\ref{lemma branch} respectively. We further compute the Hausdorff dimension of the closure of $B$, and we prove that $\Aut B=N_{\Aut T}(B)$; see Proposition~\ref{prop:hausdorff-dimension} and Theorem~\ref{thm:rigidity} respectively.

As a side result, we also make the following observation, which holds more generally for groups acting on any regular rooted tree. Note that the Basilica group of a branch group is weakly branch; cf. \cite[Thm.~1.1]{PR}.

\begin{theorem}\label{thm:Basilica-dense}
Let $G$ be a branch group with the congruence subgroup property, and suppose for some $s\ge 2$ that $\bp_{s}(G)$ also has the congruence subgroup property. If $H$ is dense in $G$, then $\bp_{s}(H)$ is dense in $\bp_{s}(G)$.
\end{theorem}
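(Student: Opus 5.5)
Dense here means dense in the profinite topology. So the task is to show that for every finite-index normal subgroup $N$ of $\bp_s(G)$ we have $\bp_s(H)N=\bp_s(G)$. Because $\bp_s(G)$ has the congruence subgroup property, every such $N$ contains a level stabiliser $\St_{\bp_s(G)}(n)$. It therefore suffices to prove that for every $n\ge 1$,
\[
\bp_s(H)\,\St_{\bp_s(G)}(n)=\bp_s(G).
\]
Equivalently, $\bp_s(H)$ and $\bp_s(G)$ must have the same image in $\Aut T/\St_{\Aut T}(n)$, the finite group of actions on the first $n$ levels. Since $\bp_s(G)$ is generated by the elements $\beta_i^s(g)$ with $g\in G$ and $0\le i\le s-1$, it is enough to show that the action of each such generator on level $n$ is realised by some element of $\bp_s(H)$.

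On the side of $G$, the congruence subgroup property of $G$ means the profinite and congruence topologies agree. So $H$ dense in $G$ gives $H\St_G(m)=G$ for every $m$. Thus for each $g\in G$ and each $m$ there is $h\in H$ with $g^{-1}h\in\St_G(m)$.

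The key step is a depth estimate for the maps $\beta_i^s$. I would show that if $k\in\St_{\Aut T}(m)$, then $\beta_i^s(k)\in\St_{\Aut T}(f(m))$ for some function $f$ with $f(m)\to\infty$; in fact $f(m)\ge m$ should hold. I would prove this by induction on $m$ using the recursion:
\begin{itemize}
\item $\beta_i^s(k)=(\beta_{i-1}^s(k),1)$ for $i\ge1$, so the depth only increases with $i$;
\item $\beta_0^s(k)=(\beta_{s-1}^s(k|_0),\beta_{s-1}^s(k|_1))$ when $k|^\epsilon=1$, and here $k|_x\in\St(m-1)$.
\end{itemize}
Iterating shows that $\beta_i^s(k)$ fixes level roughly $m$ (more precisely, one gets about $s\cdot m$ levels after unwinding). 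I expect this to be the main technical point: the recursion mixes the indices, so the induction should be on $m$ simultaneously for all $i$.

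Given $n$, I would choose $m$ with $f(m)\ge n$. For a generator $\beta_i^s(g)$, pick $h\in H$ with $k=h^{-1}g\in\St_G(m)$. Since $\beta_i^s$ is a homomorphism, $\beta_i^s(g)=\beta_i^s(h)\beta_i^s(k)$. Here $\beta_i^s(h)\in\bp_s(H)$, and $\beta_i^s(k)$ lies in $\St_{\Aut T}(n)\cap\bp_s(G)=\St_{\bp_s(G)}(n)$, because $k\in G$. Hence every generator of $\bp_s(G)$ lies in $\bp_s(H)\St_{\bp_s(G)}(n)$. This set is a subgroup, since $\St_{\bp_s(G)}(n)$ is normal, so it equals $\bp_s(G)$. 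The branch hypothesis on $G$ is used only so that the congruence subgroup property statements make sense, and through the cited result \cite[Thm.~1.1]{PR}, which guarantees that $\bp_s(G)$ is a group acting on the tree of the kind being considered.
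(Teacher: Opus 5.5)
Your proposal is correct and is essentially the paper's argument: you use the congruence subgroup property of $\bp_s(G)$ to reduce to showing $\bp_s(H)\St_{\bp_s(G)}(n)=\bp_s(G)$, and then approximate each generator $\beta_i^s(g)$ by $\beta_i^s(h)$, where $h\in H$ satisfies $h\equiv g$ modulo a level stabiliser of $G$. The only difference is that you explicitly justify the step the paper merely asserts, namely that $\beta_i^s$ maps level stabilisers of $\Aut T$ into level stabilisers; your simultaneous induction in fact gives $\beta_i^s(\St_{\Aut T}(m))\le\St_{\Aut T}(sm+i)$.
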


Referring to Theorem \ref{thm:main}, it would be interesting to see if the property of having maximal subgroups of infinite index is always preserved by the Basilica operation. That is:
\begin{question}
    Let $G$ be a finitely generated branch group with maximal subgroups of infinite index. Does $\bp_{s}(G)$, the $s$th Basilica group of $G$, for any $s\ge 2$, still have maximal subgroups of infinite index?
\end{question}

\smallskip

\noindent\textit{Organisation}. In Section~\ref{sec:prelim} we introduce basic definitions and necessary terminology for groups acting on the binary rooted tree, and here we also give the definition of the second Basilica group~$B$ of the Grigorchuk--Erschler group. We next establish some basic properties of the group~$B$ in Section~\ref{sec:properties}. In Section~\ref{sec:operation-dense}, we prove Theorem~\ref{thm:Basilica-dense}. Finally, in Section~\ref{sec:maximal}, we prove Theorem~\ref{thm:main}.

\medskip

\noindent\textit{Notation.}
Throughout, we  use left-normed commutators, for example, $[x,y,z] = [[x,y],z]$.

\section{Preliminaries}\label{sec:prelim}

\subsection{Groups acting on the binary rooted tree}
Let $X = \{0, 1\}$ and $X^*$ denote the set of all finite words over the alphabet~$X$. We naturally identify~$X^*$ with the binary rooted tree~$T$, where the root corresponds to the empty word $\epsilon$, and each word~$v \in X^*$ represents a vertex. A vertex~$v$ is connected by an edge to its children,~$v0$ and~$v1$, which are obtained by appending~$0$ and~$1$ to~$v$, respectively. The levels of~$T$ correspond to the lengths of the words in~$X^*$. The words of length~$n$ in~$X^*$ form the $n$th layer of~$T$ and is denoted by~$X^n$. In the sequel, we do not distinguish between the words in~$X^*$ and the vertices of~$T$.

An automorphism of $T$ is a graph automorphism that fixes the root and preserves the adjacency of vertices. Such an automorphism permutes the vertices within each level of the tree while maintaining the tree structure. The set of all automorphisms of $T$ forms a group under composition, denoted by $\Aut T$.

For every $v \in T$, let $T_v$ denote the subtree of $T$ rooted at $v$, consisting of the set of words $\{vw \mid w \in X^*\}$. Each subtree $T_v$ can be identified with the original tree $T$ by mapping a vertex $vw \in T_v$ to the vertex $w \in T$ for every $w \in X^*$.  

Now, let $g \in \Aut T$. Identifying both $T_v$ and $T_{g(v)}$ with the original tree $T$, we obtain an automorphism of $T$, \emph{the section of $g$ at $v$}, denoted by $g|_v$, which is uniquely determined by the rule:
\[
g(vw) = g(v)g|_v(w), \quad \text{for all } w \in X^*.
\]
By following this convention, for every $n \in \mathbb{N}$, the action of $g$ can be uniquely described as  
\[
g = (g|_{x_1}, \dots, g|_{x_{2^n}}) \, \sigma,
\]
where $g|_{x_i}$ denotes the section of $g$ at $x_i \in X^n$, with $x_1, \dots, x_{2^n}$ arranged lexicographically. The term $\sigma \in \Sym X^n$ represents the action of $g$ restricted to the $n$th level of the tree $T$.

\subsection{Key subgroups of $\Aut T$}

Let $G \leq \Aut T$. The subgroup $G \leq \Aut T$ is called \emph{self-similar} if, for every $v \in T$, the set $G_v = \{g|_v \, | \, g \in G\}$ is contained in $G$, and is called \emph{spherically transitive} if $G$ acts transitively on each layer of $T$.

A \emph{vertex stabiliser} $\st_G(v)$ of a vertex $v \in T$ in $G$ is the set of all $g \in G$ that fix $v$, which is clearly a group under composition. The intersection of all vertex stabilisers of the vertices at level $n$ forms the \emph{$n$th level stabiliser} in $G$, denoted by $\St_G(n)$. The $n$th level stabiliser can be seen as the kernel of the induced action of $G$ on $X^n$. Hence, $\St_G(n)$ is a normal subgroup of $G$ and has finite index in $G$. Analogous to the classical congruence subgroup property for $\text{SL}_n(\mathbb{Z})$, we say $G$ has the \emph{congruence subgroup property} if every subgroup of finite index in $G$ contains a level stabiliser $\St_G(n)$ for some $n \in \mathbb{N}$. 

The elements of $\St_G(n)$ can be completely determined by the following map
\begin{align*}
\psi_n : \St_G(n) &\rightarrow \Aut T \times \overset{2^n}\ldots \times \Aut T\\
 g & \mapsto (g|_{x_1}, \dots, g|_{x_{2^n}}).
\end{align*}
If $G$ is self-similar then $\St_G(n)$ can be identified with a subgroup of $G \times \overset{2^n}\ldots \times G$. In that case, as done at the end of the previous subsection, we identify an element $\St_G(n)$ with its image under the monomorphism $\psi_n$. 

A spherically transitive subgroup $G$ is called \emph{fractal} if $(\st_G(v))_v = G$ for every $v \in T$, and is called \emph{strongly fractal} if $(\St_G(1))_x =G$ for every $x \in X$.

A \emph{rigid vertex stabiliser} $\rst_G(v)$ for $v\in T$ is the subgroup of all elements in $G$ that fix every vertex outside the subtree $T_v$. For $v \in X^n$ and $w \in X^m$ with $n \leq m$, we either have $\rst_G(w) \leq \rst_G(v)$, if~$v$ is a prefix of~$w$, or $\rst_G(v) \cap \rst_G(w) = \emptyset$. The \emph{$n$th level rigid stabiliser} $\Rst_G(n)$ is the subgroup generated by the union of all rigid vertex stabilisers of the vertices at level $n$:
\[
\Rst_G(n) =  \langle \rst_G(v)\mid  \, v\in X^n \rangle = \prod \limits_{v \in X^n} \rst_G(v).
\]

Let $G \leq \Aut T$ be spherically transitive. The group $G$ is said to be \emph{weakly branch} if all $\Rst_G(n)$ are non-trivial. Further~$G$ is said to be \emph{branch} if, in addition, $\Rst_G(n)$ has finite index in $G$ for every $n$. 

Now assume that $G \leq \Aut T$ is self-similar and spherically transitive. If there exists a non-trivial subgroup $K$ in~$G$ such that $\psi_1^{-1}(K \times K)$ is contained in~$K$ then~$G$ is said to be \emph{weakly regular branch} over~$K$. If~$K$ also has finite index in~$G$ then~$G$ is said to be \emph{regular branch} over~$K$.

We record a straightforward fact for later use.
\begin{lemma}\label{lem:derived-contained-st}
Let $H$ be a  subgroup of $\Aut T$. Then the quotient $\St_{H}(n-1)/\St_H(n)$ is elementary abelian for all $n\in\mathbb{N}$. As a consequence $\St_H(n-1)'\le \St_H(n)$ for all $n\in\mathbb{N}$.
\end{lemma}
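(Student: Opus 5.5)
The plan is to make the quotient $\St_H(n-1)/\St_H(n)$ act faithfully on level~$n$ and observe that this action lands in an elementary abelian $2$-group. Fix $n\ge 1$ and take any $g\in\St_H(n-1)$. Since $g$ fixes every vertex $v\in X^{n-1}$, its action on level $n$ only permutes the two children $v0,v1$ of each such $v$. So the permutation it induces on $X^n$ is determined by the local actions $g|_v|^\epsilon\in C_2=\Sym X$, one for each $v\in X^{n-1}$.

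The first step is to write down the map
\[
\varphi\colon \St_H(n-1)\to \prod_{v\in X^{n-1}} C_2,\qquad g\mapsto \bigl(g|_v|^\epsilon\bigr)_{v\in X^{n-1}}.
\]
Next I would check that $\varphi$ is a homomorphism. The elements of $\St_H(n-1)$ fix each vertex of level $n-1$, so for such $g,h$ the section rule gives $(gh)|_v=g|_v\,h|_v$. Taking the local action at the root is a homomorphism $\Aut T\to C_2$, so $\varphi(gh)=\varphi(g)\varphi(h)$. This is the only point that needs any care, and it holds precisely because everything fixes level $n-1$.

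The kernel of $\varphi$ consists of the elements fixing every vertex of level $n$, which is exactly $\St_H(n)$. Therefore $\St_H(n-1)/\St_H(n)$ embeds into $C_2^{2^{n-1}}$. A subgroup of an elementary abelian $2$-group is again elementary abelian, which gives the first claim.

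For the consequence, the quotient being abelian means every commutator of elements of $\St_H(n-1)$ lies in the kernel, so $\St_H(n-1)'\le\St_H(n)$. For $n=1$ the convention $\St_H(0)=H$ is consistent with the argument, since then $X^0=\{\epsilon\}$ and $\varphi$ is just the root action $H\to C_2$. I do not expect any real obstacle here.
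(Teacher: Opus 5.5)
Your proof is correct and is essentially the paper's argument: both embed $\St_H(n-1)/\St_H(n)$ into $C_2^{2^{n-1}}$. The paper takes $\St_{\Aut T}(n-1)/\St_{\Aut T}(n)\cong C_2^{2^{n-1}}$ as known and restricts to $H$ using $\St_H(n)=\St_{\Aut T}(n)\cap H$, whereas you verify that embedding directly through the local-action homomorphism $\varphi$.
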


\begin{proof}
Indeed, first notice that $\St_{\Aut T}(n-1)/\St_{\Aut T}(n)\cong C_2 \times \overset{2^{n-1}}\dots \times C_2$. Since $\St_{H}(n)=\St_{\Aut T}(n) \cap H$ it follows that $\St_H(n-1)/\St_H(n)$ embeds in $\St_{\Aut T}(n-1)/\St_{\Aut T}(n)$, and consequently $\St_{H}(n-1)/\St_H(n)$ is elementary abelian.
\end{proof}

\subsection{The Grigorchuk--Erschler group and the Basilica operation}

We recall that the Grigorchuk--Erschler group $G$ is a $3$-generated group acting on the binary rooted tree. Writing $\tau$ for the non-trivial transposition in $\text{Sym}(\{0,1\})$, the generators $a$, $b_0$, $b_1$ of $G$ are defined as follows:
\begin{align*}
    a = \tau, 
    && b_0= (b_1,1), && b_1=(b_0,a).
\end{align*}
As indicated before, for notational convenience, in the sequel we will often omit writing $\psi_1$, or more generally  $\psi_n$  for $n\in\mathbb{N}$.

We recall the Basilica operation from \cite{PR} for groups acting on the binary rooted tree:
Let $s \geq 2$ be an integer and let $G$ be a subgroup of the automorphism group $\Aut T$. The $s$th Basilica group of $G$ is given by $\bp_s(G) = \LD\{\beta^s_i(g)\mid g \in G, \, i \in \{0,1,\dots, s-1\}\}\,\RD$, where $\beta_i^s: \Aut T \rightarrow \Aut T$ are monomorphisms given by
\begin{align*}
    \beta^s_i(g) &= (\beta^s_{i-1}(g),1) &\text{ for } i \in \{1,\dots,s-1\},\\
		\beta^s_0(g) &=  (\beta^s_{s-1}(g|_0),  \beta^s_{s-1}(g|_1))g|^\epsilon,
\end{align*}
 where $g|_x$ is the restriction of~$g$ to the subtree rooted at a first-level vertex $x$, and  $g|^\epsilon$ is the local action of the element~$g$ at the root of~$T$. Hence the second Basilica group $B:=\bp_2(G)$ of $G$ is given by
\[B = \langle \beta_0^2(a), \beta_0^2(b_0),\beta_0^2(b_1),\beta_1^2(a), \beta_1^2(b_0),\beta_1^2(b_1)\rangle.\]
For convenience, we omit the superscript $2$ from the $\beta_i$ maps. We have
\begin{align*}
    \beta_0(a) &= (1,1)\tau = a, && \beta_1(a) = (\beta_0(a),1) = (a,1), \\
    \beta_0(b_0) &= (\beta_1(b_1),1), && \beta_1(b_0) = (\beta_0(b_0),1),\\
    \beta_0(b_1) &= (\beta_1(b_0),\beta_1(a)), && \beta_1(b_1) = (\beta_0(b_1),1).
\end{align*}
Now set $\alpha = \beta_1(a), c_0 = \beta_1(b_0), c_1 = \beta_0(b_0), c_2 = \beta_1(b_1)$ and $c_3 = \beta_0(b_1)$. Then the second Basilica group $B$ of $G$ is generated by the elements
\begin{align*}
    a, \quad\alpha = (a,1), \quad c_0 = (c_1,1), \quad c_1 = (c_2,1), \quad c_2 = (c_3,1), \quad c_3 = (c_0,\alpha).
\end{align*}

\section{Properties of $B$}\label{sec:properties}

The following result is immediate from \cite[Thm.~1.1]{PR}.
\begin{lemma}
 Let $B$ be the second Basilica of the Grigorchuk--Erschler group. Then $B$ is
 \begin{enumerate}
		\item [(i)] spherically transitive,
		\item [(ii)] self-similar, 
		\item [(iii)] strongly fractal, 
		\item [(iv)] weakly branch.
	\end{enumerate}
\end{lemma}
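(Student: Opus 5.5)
The plan is to deduce all four properties from \cite[Thm.~1.1]{PR}, which gives them for $\bp_s(G)$ once $G$ has the corresponding properties. So I first check the hypotheses on the Grigorchuk--Erschler group $G$, and then confirm the properties directly from the explicit generators of $B$.

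\emph{Hypotheses on $G$.} The group $G$ is self-similar, since the sections of $a,b_0,b_1$ lie in $\{1,a,b_0,b_1\}$. It is spherically transitive: $a$ swaps the two first-level vertices, and $\st_G(0)$ projects onto $G$ at vertex $0$. To see the latter, note that $b_0|_0=b_1$ and $b_1|_0=b_0$, and $(b_1)^a=(a,b_0)$ has section $a$ at $0$. It is strongly fractal, since $\St_G(1)=\langle b_0,b_1,b_0^a,b_1^a\rangle$ projects onto $\langle a,b_0,b_1\rangle$ at each vertex. It is branch, being one of Grigorchuk's groups. These are the standard inputs of \cite[Thm.~1.1]{PR}, which then yields (i)--(iv) for $B=\bp_2(G)$.

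\emph{Direct checks on $B$.} For (ii), every section of the six generators $a,\alpha,c_0,\dots,c_3$ lies in $\{1,a,\alpha,c_0,\dots,c_3\}$. Hence $B$ is self-similar.

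For (iii), I compute the projections of $\St_B(1)$. This subgroup is generated by $\alpha,c_0,\dots,c_3$ together with their conjugates by $a$. Their first coordinates are $a,c_1,c_2,c_3,c_0$, and the conjugates by $a$ contribute $\alpha$ from $c_3^a=(\alpha,c_0)$. So the first projection contains all six generators, and by conjugating with $a$ the same holds at vertex $1$. This gives strong fractality.

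For (i), spherical transitivity follows by induction on the level. The group $B$ is transitive on the first level through $a$, and it is fractal by (iii). Fractality together with first-level transitivity gives transitivity on every level.

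For (iv), the paper notes $B$ is weakly branch by \cite[Thm.~1.1]{PR}. A direct route is to find a nontrivial $K$ with $K\times K\subseteq\psi_1(K)$. The natural candidate is $K=B'$ or a subgroup generated by specific commutators such as $[a,\alpha]$. One would exhibit $([x,y],1)$ as a commutator in $B$ via the elements $c_i=(c_{i+1},1)$. Then strong fractality and transitivity spread this to all vertices, so every $\Rst_B(n)$ is nontrivial.

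The main obstacle is (iv) done by hand, in particular producing $([a,\alpha],1)\in B'$. I would try commutators $[c_3,\alpha^a]$-type elements, using the fact that $\alpha^a=(1,a)$ and $c_3=(c_0,\alpha)$ act on disjoint coordinates. If that becomes messy, I would simply cite \cite[Thm.~1.1]{PR}, as the paper does.
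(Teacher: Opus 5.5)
Your proposal is correct and follows the paper, whose whole proof is the citation of \cite[Thm.~1.1]{PR}. Your additional direct checks of (i)--(iii) from the generators $a,\alpha,c_0,\dots,c_3$ are also correct, and for (iv) by hand the computations in Lemma~\ref{lemma branch} (starting from $[\alpha,c_3^{a}]=([a,\alpha],1)$) give exactly $B'\times B'\le B'$, whence $\Rst_B(n)\ge B'\times\cdots\times B'\ne 1$ for all $n$.
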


\begin{lemma}\label{lem:relations}
    Let $B$ be the second Basilica of the Grigorchuk--Erschler group. Then, for all $i,j \in \{0,1,2,3\}, k,\ell \in \{0,1,2\}$ and $s,t \in \{0,1,3\}$, the following equalities hold: 
    \begin{enumerate}
       \item[(i)] $a^2 = 1, \alpha^2 = 1, c_i^2 = 1,$
        \item[(ii)] $[c_i,c_j] = 1,$ 
        \item[(iii)] $[c_k,c_{\ell}^{a}] = 1,$  
        \item[(iv)] $[c_s,c_t^{\alpha}] = 1,$  
        \item[(v)] $[c_k,a,\alpha] = [c_k,\alpha].$
    \end{enumerate}
\end{lemma}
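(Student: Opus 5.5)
The plan is to verify each relation directly from the recursive definitions
\[
a=\tau,\quad \alpha=(a,1),\quad c_0=(c_1,1),\quad c_1=(c_2,1),\quad c_2=(c_3,1),\quad c_3=(c_0,\alpha).
\]
Parts (i) and (ii) need an induction on the level of the tree, because the $c_i$ are defined in terms of one another. Parts (iii)--(v) then follow by short first-level computations that use (i) and (ii).

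\textbf{Parts (i) and (ii).} Both $a^2=1$ and $\alpha^2=(a^2,1)=1$ are immediate. For the $c_i$, every $c_i$ lies in $\St_B(1)$. We have $c_i^2=(c_{i+1}^2,1)$ for $i\le 2$ and $c_3^2=(c_0^2,\alpha^2)=(c_0^2,1)$, with indices taken mod $4$. We prove by induction on $n$, simultaneously for all $i$, that $c_i^2$ acts trivially on $X^n$. The case $n=1$ holds because each $c_i$ fixes the first level. The induction step holds because the only section of $c_i^2$ that could act nontrivially is $c_{i+1}^2$, which acts trivially on $X^{n}$ by hypothesis. For (ii), every $c_i$ has the form $(c_{i+1},y_i)$ with $y_i\in\{1,\alpha\}$, so
\[
[c_i,c_j]=([c_{i+1},c_{j+1}],[y_i,y_j]).
\]
The second coordinate is trivial, since $y_i,y_j$ both lie in the abelian group $\langle\alpha\rangle$. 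The same simultaneous induction on levels then gives $[c_i,c_j]=1$.

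\textbf{Part (iii).} For $\ell\in\{0,1,2\}$ we have $c_\ell=(x_\ell,1)$, so $c_\ell^a=(1,x_\ell)$. This commutes with $c_k=(x_k,1)$ for every $k\in\{0,1,2\}$, because the two elements have disjoint supports on the first level. No induction is needed.

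\textbf{Part (iv).} Since $\alpha=(a,1)$, conjugating $c_t=(c_{t+1},y_t)$ by $\alpha$ gives $(c_{t+1}^a,y_t)$. Hence
\[
[c_s,c_t^\alpha]=([c_{s+1},c_{t+1}^a],[y_s,y_t]).
\]
For $s,t\in\{0,1,3\}$ the indices $s+1,t+1$ (mod $4$) lie in $\{1,2,0\}$, so the first coordinate is trivial by (iii). The second coordinate is trivial because $y_s,y_t\in\{1,\alpha\}$.

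\textbf{Part (v).} For $k\le 2$ write $c_k=(x,1)$ with $x=c_{k+1}$, so $x^2=1$ by (i). Using the convention $[g,h]=g^{-1}g^h$, we get $[c_k,a]=(x,1)(1,x)=(x,x)$. Then
\[
[c_k,a,\alpha]=(x,x)(x^a,x)=(xx^a,1),
\qquad
[c_k,\alpha]=(x,1)(x^a,1)=(xx^a,1),
\]
and these are equal.

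The main obstacle is the simultaneous induction in (i) and (ii). It must be set up over the whole family $\{c_0,\dots,c_3\}$ at once, because the recursion cycles through all four elements. Once that is in place, every other part is a one-line computation on the first level.
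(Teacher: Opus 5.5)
Your proposal is correct and takes the same route as the paper, which only asserts that all five relations are straightforward consequences of the recursive definitions of the generators. Your simultaneous level-by-level induction for (i) and (ii), together with the first-level computations for (iii)--(v), is exactly the verification the paper leaves to the reader.
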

\begin{proof}
    The proofs of the statements are straightforward from the definition of  the generators of~$B$.
\end{proof}

\subsection{Key subgroups of $B$}

The following lemma is in contrast with the fact that the Grigorchuk-Erschler group is not regular branch over its derived subgroup.

\begin{lemma} \label{lemma branch}
Let $B$ be the second Basilica of the Grigorchuk--Erschler group. Then $B$ is regular branch over its commutator subgroup.
\end{lemma}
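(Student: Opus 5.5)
The plan is to verify the two conditions in the definition of regular branch over $B'$. First, $B'$ has finite index: $B/B'$ is abelian and generated by the images of the six involutions $a,\alpha,c_0,c_1,c_2,c_3$ (Lemma~\ref{lem:relations}(i)), so $|B:B'|\le 2^6$. Also $B'\neq 1$ since, for example, $[a,\alpha]\ne 1$. The main work is to show $\psi_1^{-1}(B'\times B')\le B'$, that is, $B'\times 1\subseteq \psi_1(B')$ and $1\times B'\subseteq\psi_1(B')$. The second inclusion follows from the first by conjugating with $a$, since $B'$ is normal in $B$.

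For $B'\times 1$, I will use that $B'$ is the normal closure in $B$ of the commutators $[x,y]$ with $x,y\in S=\{a,\alpha,c_0,c_1,c_2,c_3\}$. Suppose each $([x,y],1)$ lies in $\psi_1(B')$. Then, because $B$ is strongly fractal, every $g\in B$ is the first coordinate of some $(g,h)\in\psi_1(\St_B(1))$. Conjugating gives $([x,y]^g,1)=([x,y],1)^{(g,h)}\in\psi_1(B')$, using normality of $B'$ in $B$. Taking products then yields all of $B'\times 1$.

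It remains to produce the pairs $([x,y],1)$ from explicit commutators of elements of $B$ at level one. The pairs $[c_i,c_j]$ are trivial by Lemma~\ref{lem:relations}(ii), so they need nothing. For $[a,c_i]$ I take
\[
[\alpha,c_0]=([a,c_1],1),\quad [\alpha,c_1]=([a,c_2],1),\quad [\alpha,c_2]=([a,c_3],1),\quad [\alpha,c_3]=([a,c_0],1),
\]
where the last uses $[1,\alpha]=1$. For $[a,\alpha]$, I note that $c_3^a=(\alpha,c_0)$ and hence $[\alpha,c_3^a]=([a,\alpha],1)$. For $[\alpha,c_i]$ with $i=1,2,3$, I use $[c_3^a,c_{i-1}]=([\alpha,c_i],1)$.

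The remaining case, and the step I expect to be the main obstacle, is $([\alpha,c_0],1)$. The natural candidate $[c_3^a,c_3]=([\alpha,c_0],[c_0,\alpha])$ has a non-trivial second coordinate. To deal with this, I would first observe that $[\alpha,c_0]=([a,c_1],1)$. So it suffices to show that $[\alpha,c_0]$ is congruent, modulo the normal closure of the commutators already obtained, to a product of conjugates of those commutators. For this I would use relation (v) of Lemma~\ref{lem:relations}, $[c_k,a,\alpha]=[c_k,\alpha]$, together with the commutation relations (iii)--(iv). If that fails, the fallback is to find a suitable element $(1,h)\in\psi_1(B')$ that cancels $[c_0,\alpha]$ in the second coordinate. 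Such an element should be obtainable from $a$-conjugates of the elements constructed above, which have the form $(1,\ast)$.
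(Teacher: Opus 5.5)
Your proposal is correct and is essentially the paper's proof: the paper uses the same level-one commutators (up to inversion), and your first idea for the remaining case is exactly its argument, since relation (v) gives $[c_0,\alpha]=[c_0,a]^{-1}[c_0,a]^{\alpha}$, realised concretely as $[[c_3,\alpha],c_3^{a}]=([c_0,a,\alpha],1)=([c_0,\alpha],1)$, so the fallback is not needed. Your explicit appeal to strong fractality, to pass from the generating commutators to all of $B'\times 1$, fills in a step the paper leaves implicit.
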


\begin{proof} From Lemma~\ref{lem:relations}(ii) the elements $c_i$ and $c_j$ commute for all $i,j \in \{0,1,2,3\}$. Furthermore,
\begin{align*}
    [\alpha,c_3^{a}] &= ([a,\alpha],1),\\
    [c_0,\alpha]  &= ([c_1,a],1), && [c_1,\alpha]  = ([c_2,a],1), \\
    [c_2,\alpha]  &= ([c_3,a],1), && [c_3,\alpha]  = ([c_0,a],1), \\
    [c_0,c_3^{a}] &= ([c_1,\alpha],1), && [c_1,c_3^{a}] = ([c_2,\alpha],1),\\
    [c_2,c_3^{a}] &= ([c_3,\alpha],1), && [c_3,\alpha,c_3^{a}] = ([c_0,a,\alpha],1) = ([c_0,\alpha],1).
\end{align*}
So $B'\times B'\le B'$. As $B'$ clearly has finite index in $B$, since $B$ is generated by 6 elements of order~2, the result follows.
\end{proof}

\begin{prop}\label{prop:stabilisers}
    Let $B$ be the second Basilica of the Grigorchuk--Erschler group. Then
    \begin{align*}
    \St_B(1)&=\langle \alpha, c_3,c_2,c_1,c_0\rangle^B\\
    \St_B(2)&=\langle  c_3,c_2,c_1,c_0\rangle^B\\
    \St_B(3)&=\langle  
   [c_3,\alpha],c_2,c_1,c_0\rangle^B\\
     \St_B(4)&=\langle  
     [c_3,\alpha], [c_2,a,c_3],c_1,c_0\rangle^B\\
      \St_B(5)&=\langle  
      [c_3,\alpha],[c_2,a,c_3],[c_1,\alpha,c_2],c_0\rangle^B\\
       \St_B(6)&=\langle  
     [c_3,\alpha],[c_2,a,c_3],[c_1,\alpha,c_2],[c_0,c_3^a,c_2]\rangle^B
    \end{align*}
  and 
\begin{align*}
\log_2|B:\St_B(1)|&=1\\
\log_2|B:\St_B(2)|&=3\\
\log_2|B:\St_B(3)|&=7\\
\log_2|B:\St_B(4)|&=15\\
\log_2|B:\St_B(5)|&=31\\
    \log_2|B:\St_B(6)|&=63.
\end{align*}
\end{prop}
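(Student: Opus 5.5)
The plan is to argue level by level, proving simultaneously that $B$ induces the full automorphism group $\Aut T_{n}$ of the truncated tree on the first $n$ levels for $n\le 6$, and that $\St_B(n)$ equals the normal closure $N_n$ listed in the statement. Since $\log_2|\Aut T_n| = 2^n-1$, the index statements are exactly the claim that $B/\St_B(n)\cong \Aut T_n$. The upper bound $|B:\St_B(n)|\le 2^{2^n-1}$ is automatic. Everything therefore reduces to the lower bound and to identifying the generators. The base cases $n=1,2$ are immediate: only $a$ moves level~$1$; modulo $\St_B(1)$, $\alpha=(a,1)$ acts nontrivially on level~$2$; and all $c_i$ have sections that are either $c_j$ or $\alpha$, which fix level~$1$. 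So $\St_B(2)\supseteq\langle c_0,\dots,c_3\rangle^B$ with index $8$.

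For the inductive step I would use the following standard module argument. Assume $B/\St_B(n-1)\cong\Aut T_{n-1}$. By Lemma~\ref{lem:derived-contained-st}, $\St_B(n-1)/\St_B(n)$ is an elementary abelian $2$-group. It embeds $B$-equivariantly into the permutation module $\mathbb{F}_2[X^{n-1}]$, by recording at which level-$(n-1)$ vertices the element swaps the two children. Since $B$ acts on $X^{n-1}$ through a transitive $2$-group, this permutation module is cyclic with unique maximal submodule the augmentation submodule (the even-weight vectors). Hence it suffices to exhibit one $g\in\St_B(n-1)$ whose pattern has odd weight. Then the $B$-submodule generated by $g$ is everything, so $|\St_B(n-1):\St_B(n)|=2^{2^{n-1}}$ and $\log_2|B:\St_B(n)|=2^n-1$. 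The witnesses are the new generators in the list: $c_3$ for level $2\to3$ (its only nontrivial level-$2$ pattern comes from $\alpha$ in position $1$), $[c_3,\alpha]$ for $3\to4$, $[c_2,a,c_3]$ for $4\to5$, $[c_1,\alpha,c_2]$ for $5\to6$, and $[c_0,c_3^a,c_2]$ at level $6$. Each can be checked by expanding sections with $\psi$, using the identities in Lemma~\ref{lem:relations} and the section formulas from the proof of Lemma~\ref{lemma branch}. For instance, $[c_3,\alpha]=([c_0,a],1)$, and $[c_0,a]$ has sections $(c_1,c_1)$, which acts like $(\alpha\text{-type},\alpha\text{-type})$ one level further down. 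The point to verify each time is that the relevant element fixes level $n-1$ and swaps at an odd number of level-$(n-1)$ vertices. The shift of the index $c_i\mapsto c_{i+1}$ explains the cyclic pattern of the witnesses.

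For the descriptions of $\St_B(n)$, I would prove two inclusions. First, $N_n\subseteq\St_B(n)$: each listed generator fixes level $n$, which follows from the same section computations, using that the unchanged generators $c_i$ already lie deep enough. Second, $|B:N_n|\le 2^{2^n-1}$. For this I write $N_{n-1}=\langle g_{\rm old}, \ldots\rangle^B$ and $N_n$ as $N_{n-1}$ with $g_{\rm old}$ replaced by $g_{\rm new}=[g_{\rm old},x]$ or $[g_{\rm old},x,y]$. Then I show that $N_{n-1}/N_n$ is generated by the $B$-conjugates of the image of $g_{\rm old}$, that these images are involutions commuting modulo $N_n$, and that $\St_B(n-1)$ acts trivially on them modulo $N_n$. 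Then $N_{n-1}/N_n$ is a quotient of $\mathbb{F}_2[X^{n-1}]$, so has order at most $2^{2^{n-1}}$. Combined with the lower bound this forces $N_n=\St_B(n)$.

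The main obstacle is this last step: verifying that commutators of conjugates of $g_{\rm old}$ with one another, and with $\St_B(n-1)$, land in $N_n$. I would handle it by pushing everything down via $\psi$ and using the commutation relations (ii)--(v) of Lemma~\ref{lem:relations} together with $B'\times B'\le B'$ from Lemma~\ref{lemma branch}. If a clean argument fails, the fallback is a finite computation in $\Aut T_6$.
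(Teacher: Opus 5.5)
Your lower-bound mechanism is sound and genuinely different from the paper, which checks by hand that no nontrivial product of its $2^{n-1}$ listed elements lies in $\St_B(n)$. Since $B/\St_B(n-1)$ is a transitive $2$-group on $X^{n-1}$, a single element of $\St_B(n-1)$ whose swap pattern at level $n-1$ has odd weight suffices. However, the witnesses you name from level $3\to4$ onwards do not work. The new generators all lie in $\St_B(6)$. Indeed $[c_3,\alpha]=([c_0,a],1)$ with $[c_0,a]=(c_1,c_1)$, and $c_1$ is not $\alpha$-like: it first acts below the vertex $0010$, so $c_1\in\St_B(4)\setminus\St_B(5)$. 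Hence $[c_3,\alpha]\in\St_B(6)\setminus\St_B(7)$, with a weight-$2$ pattern. The elements $[c_2,a,c_3]=(1,[c_3,\alpha])$ and $[c_1,\alpha,c_2]=([c_2,a,c_3],1)$ lie deeper still, so their patterns at levels $3,4,5$ are zero. (Also, $[c_0,c_3^a,c_2]$ as printed is trivial: it equals $([c_1,\alpha,c_3],1)$, and $[c_1,\alpha,c_3]=([c_2,a,c_0],1)=(([c_3,c_1],1),1)=1$. The intended element is presumably $[c_0,c_3^a,c_1]=([c_1,\alpha,c_2],1)$, as in the first display of the paper's proof.) The correct witnesses are the generators being replaced. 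The elements $c_2,c_1,c_0$ each swap at exactly one vertex, namely $010$, $0010$, $00010$. Together with $a,\alpha,c_3$ they give $\log_2|B:\St_B(n)|=2^n-1$ for $n\le 6$.

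The more serious gap is in the upper bound. Commuting involutions together with trivial action of $\St_B(n-1)$ only make $N_{n-1}/N_n$ a cyclic module for $B/\St_B(n-1)$. That is, it is a quotient of the regular module, whose dimension is $|B:\St_B(n-1)|=2^{2^{n-1}-1}$, not $2^{n-1}$. Moreover, since $\St_B(n-1)=N_{n-1}=\langle g_{\mathrm{old}}\rangle^B N_n$ by induction, trivial action of $\St_B(n-1)$ is just commutativity again, so you have imposed only one condition. What you must show is that the stabiliser $\st_B(v)$ of the vertex $v\in X^{n-1}$ at which $g_{\mathrm{old}}$ swaps centralises $g_{\mathrm{old}}$ modulo $N_n$. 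Only then is $N_{n-1}/N_n$ a quotient of $\mathbb{F}_2[B/\st_B(v)]\cong\mathbb{F}_2[X^{n-1}]$. This is exactly where $g_{\mathrm{new}}$ is used. For $n=3$, $\st_B(10)=\langle\alpha\rangle\St_B(2)$, and $[c_3,\alpha]\in N_3$ is the relation for $\alpha$. For $n=4$, $c_3^a\in\st_B(010)$, and $[c_2,c_3^a]=([c_3,\alpha],1)=[c_2,a,c_3]^a$. The remaining part is $[g_{\mathrm{old}},g_{\mathrm{old}}^x]\in N_n$ for $x$ in a transversal, e.g. $[c_3,c_3^a]=[c_3,a]^2\in\langle[c_3,\alpha]\rangle^B$. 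These are precisely the explicit identities the paper verifies via Hall--Witt and section computations. So on this half the module viewpoint organises the work but does not remove it. Note also that your fallback, a finite computation in $\Aut T_6$, cannot settle this step: the key inclusion $\St_B(6)\le N_6$, on which all the others depend, is invisible modulo $\St_B(6)$.
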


\begin{proof}
Note that
\begin{align*}
    a\in \St_B(0)\backslash \St_B(1),\\
     \alpha\in \St_B(1)\backslash \St_B(2),\\
      c_3\in \St_B(2)\backslash \St_B(3),\\
     c_2\in \St_B(3)\backslash \St_B(4),\\
     c_1\in \St_B(4)\backslash \St_B(5),\\
     c_0\in \St_B(5)\backslash \St_B(6).
\end{align*}
The first level stabiliser is straightforward. For the others, we first show by a direct computation below, that the right-hand sides are contained in the respective left-hand sides.  Indeed:
\begin{align*}
   [c_3,\alpha]&=([c_0,a],1)=(c_1,c_1,1,1)\in\St_B(6)\backslash \St_B(7)\\
   [c_2,a,c_3]&=(1,[c_3,\alpha])=(1,1,[c_0,a],1)\in \St_B(7)\backslash \St_B(8)\\
[c_1,\alpha,c_2]&=([c_2,a,c_3],1)\in \St_B(8)\backslash \St_B(9)\\
[c_0,c_3^a,c_1]&=([c_1,\alpha,c_2],1)\in \St_B(9)\backslash \St_B(10) 
\end{align*} 

We now show equality, by considering the quotient of the left-hand side by the right-hand side. Our strategy is to give a minimal generating set for each quotient $\St_B(i)/\St_B(i+1)$, which gives the second part of the statement. To obtain a minimal generating set for $\St_B(i)/\St_B(i+1)$, we recursively use the minimal generating set obtained for $\St_B(i-1)/\St_B(i)$ for describing the sections of generators of $\St_B(i)/\St_B(i+1)$.

For  $\St_B(2)$, we have
\[
\frac{\St_B(2)}{\langle  c_3,c_2,c_1,c_0\rangle^B}\le \frac{\St_B(1)}{\langle  c_3,c_2,c_1,c_0\rangle^B}=\frac{\langle \alpha, c_3,c_2,c_1,c_0\rangle^B}{\langle  c_3,c_2,c_1,c_0\rangle^B}\cong \langle \alpha\rangle^{\langle a,\alpha\rangle}.
\]
Hence, it suffices to show that no non-trivial element in $\langle \alpha\rangle^{\langle a,\alpha\rangle}$ is in $\St_B(2)$. Note that $[a,\alpha]=(a,a)\in\St_B(1)\backslash\St_B(2)$ is in the centre of $\langle a,\alpha\rangle$. Hence 
\[
\langle \alpha\rangle^{\langle a,\alpha\rangle}=\langle \alpha,[a,\alpha]\rangle\cong C_2\times C_2.
\]
A direct check then yields that $\St_B(2)=\langle  c_3,c_2,c_1,c_0\rangle^B$.

For  $\St_B(3)$, we have
\begin{align*}
\frac{\St_B(3)}{\langle  [c_3,\alpha],c_2,c_1,c_0\rangle^B}&\le \frac{\St_B(2)}{\langle  [c_3,\alpha],c_2,c_1,c_0\rangle^B}=\frac{\langle  c_3,c_2,c_1,c_0\rangle^B}{\langle   [c_3,\alpha],c_2,c_1,c_0\rangle^B}.
\end{align*}
Note that $[c_3,a]=(c_0\alpha,\alpha c_0)=(c_1a,1,ac_1,1)$ has order 4. Furthermore 
$[c_3,a,\alpha]=([c_0\alpha,a],1)=(c_1a,ac_1,1,1)\in\St_B(2)\backslash\St_B(3)$  has order 4 and $[c_3,a,\alpha]^2=[c_3,a]^2([c_3,a]^2)^\alpha$. Next, note that
\[
  [c_3,a,\alpha,c_3]=([c_0\alpha,a,c_0],1)=((c_0\alpha)^2,1)\in\St_B(6)\backslash \St_B(7).
  \]
  Further we have that $[c_3,a,\alpha,c_3][c_3,a,\alpha,c_3]^a=[c_3,a]^2=[c_3,a,c_3]=[c_3,a,a]$, and observe that $[c_3,a,\alpha,c_3]=[c_3,\alpha,c_3^a]$. So in particular $$
  [c_3,a,\alpha,c_3],[c_3,a,a],[c_3,a,c_3],
  [c_3,a]^2\in\langle [c_3,\alpha]\rangle^B.
  $$
  Note also that $[c_3,a,\alpha,\alpha]=[c_3,a,\alpha]^2\in \St_B(3)\backslash\St_B(4)$.
  Thus we deduce
\begin{align*}
\frac{\langle  c_3,c_2,c_1,c_0\rangle^B}{\langle    [c_3,\alpha],c_2,c_1,c_0\rangle^B}&\cong \langle c_3\rangle\times \frac{\langle [c_3,a]\rangle}{\langle [c_3,a]^2\rangle}\times \frac{\langle [c_3,a,\alpha]\rangle}{\langle [c_3,a,\alpha]^2\rangle}\times \frac{\langle [c_3,a,\alpha]^a\rangle}{\langle ([c_3,a,\alpha]^2)^a\rangle}\\
&\cong C_2\times C_2\times C_2\times C_2.
\end{align*}
Hence  $\St_B(3)=\langle  [c_3,\alpha],c_2,c_1,c_0\rangle^B$. Indeed, the commutators of the four generators $c_3$, $[c_3,a]$, $[c_3,a,\alpha]$, $[c_3,a,\alpha]^a$ all lie in $\St_B(3)$ since $\St_B(2)'\le \St_{B}(3)$. Also a direct check yields that all possible non-trivial products of these four generators lie in $\St_B(2)\backslash\St_B(3)$.

 For  $\St_B(4)$, we have
\begin{align*}
\frac{\St_B(4)}{\langle  [c_3,\alpha], [c_2,a,c_3],c_1,c_0\rangle^B}&\le \frac{\St_B(3)}{\langle  [c_3,\alpha], [c_2,a,c_3],c_1,c_0 \rangle^B}=\frac{\langle  [c_3,\alpha],c_2,c_1,c_0\rangle^B}{\langle   [c_3,\alpha], [c_2,a,c_3],c_1,c_0\rangle^B}.
\end{align*}
We consider $[c_2,a]=(c_3,c_3)\in \St_B(3)\backslash \St_B(4)$ and $[c_2,\alpha]=([c_3,a],1)\in \St_B(3)\backslash \St_B(4)$. Since $[c_3,a]^2\in\langle [c_3,\alpha]\rangle^B$  as seen above, from
\begin{align*}
    [c_2,a,c_3]&=(1,[c_3,\alpha])\\
[c_2,\alpha]^2&=([c_3,a]^2,1)
\end{align*} 
it follows that $[c_2,\alpha]^2\in\langle  [c_2,a,c_3]\rangle^B$. Also $[c_2,\alpha]^a=(1,[c_3,a])\in \St_B(3)\backslash \St_B(4)$ commutes with $[c_2,\alpha]$. Furthermore  $[c_2,a,a]=[c_2,a,c_2]=1$. Next, by the Hall-Witt identity, we have $[c_2,\alpha,c_3]^{\alpha}=([\alpha,c_3,c_2]^{-1})^{c_3}\in\langle [c_3,\alpha]\rangle^B$. Also recall that $[c_2,a,\alpha]=[c_2,\alpha]$. Lastly,
\begin{align*}
    [c_2,\alpha,c_3^a]&=([c_3,a,\alpha],1)\\
    [c_2,\alpha,c_3^a]^a&=(1,[c_3,a,\alpha])\\
    [c_2,\alpha,c_3^a]^\alpha&=([c_3,a,\alpha]^a,1)\\
     [c_2,\alpha,c_3^a]^{\alpha a}&=(1,[c_3,a,\alpha]^a).
\end{align*}
Thus, and appealing to the description of $\St_B(2)/\St_B(3)$ above, we obtain
\begin{align*}
&\frac{\langle   [c_3,\alpha],c_2,c_1,c_0\rangle^B}{\langle [c_3,\alpha], [c_2,a,c_3],c_1,c_0\rangle^B}\\
&\qquad\cong \langle c_2\rangle\times \langle c_2^a\rangle\times \frac{\langle [c_2,\alpha]\rangle}{\langle [c_2,\alpha]^2\rangle}\times \frac{\langle [c_2,\alpha]^a\rangle}{\langle ([c_2,\alpha]^a)^2\rangle}\\
&\qquad\qquad\qquad\qquad\times \frac{\langle [c_2,\alpha,c_3^a]\rangle}{\langle 
[c_2,\alpha,c_3^a]^2\rangle}\times \frac{\langle [c_2,\alpha,c_3^a]^a\rangle}{\langle ([c_2,\alpha,c_3^a]^a)^2\rangle}\times \frac{\langle [c_2,\alpha,c_3^a]^\alpha\rangle}{\langle ([c_2,\alpha,c_3^a]^\alpha)^2\rangle}\times \frac{\langle [c_2,\alpha,c_3^a]^{\alpha a}\rangle}{\langle ([c_2,\alpha,c_3^a]^{\alpha a})^2\rangle}\\
&\qquad\cong C_2\times \overset{8}\dots\times C_2.
\end{align*}
Indeed, the first-level sections of the above generators yield a minimal generating set for $\St_B(2)/\St_B(3)$, in each component respectively. Hence $\St_B(4)=\langle  [c_3,\alpha], [c_2,a,c_3],c_1,c_0\rangle^B$.

For $\St_B(5)$, we have
\begin{align*}
\frac{\St_B(5)}{\langle   [c_3,\alpha],[c_2,a,c_3],[c_1,\alpha,c_2],c_0\rangle^B}&\le \frac{\St_B(4)}{\langle   [c_3,\alpha],[c_2,a,c_3],[c_1,\alpha,c_2],c_0\rangle^B}\\
&=\frac{\langle   [c_3,\alpha], [c_2,a,c_3],c_1,c_0\rangle^B}{\langle   [c_3,\alpha],[c_2,a,c_3],[c_1,\alpha,c_2],c_0\rangle^B}.
\end{align*}
We consider $[c_1,a]=(c_2,c_2)\in \St_B(4)\backslash \St_B(5)$ and $[c_1,\alpha]=([c_2,a],1)\in \St_B(4)\backslash \St_B(5)$, both of which have order~2. Note that $[c_1,c_3^a]^2=([c_2,\alpha]^2,1)=([c_1,a,c_3]^a)^2$.
Since $[c_2,\alpha]^2\in\langle  [c_2,a,c_3]\rangle^B$,   from
\begin{align*}
    [c_1,a,c_3]^2&=(1,[c_2,\alpha]^2)\\
[c_1,\alpha,c_2]&=([c_2,a,c_3],1)
\end{align*} 
it follows that $ [c_1,a,c_3]^2\in\langle  [c_1,\alpha,c_2]\rangle^B$. Also $[c_1,\alpha]^a=(1,[c_2,a])\in \St_B(4)\backslash \St_B(5)$ commutes with $[c_1,\alpha]$. Furthermore  $[c_1,a,a]=1=[c_1,\alpha,\alpha]$,  and as before by the Hall-Witt identity $[c_1,c_3^a,c_2]=([c_2,\alpha,c_3],1)\in\langle [c_2,a,c_3]\rangle^B$. Recall also that  $[c_1,a,\alpha]=[c_1,\alpha]$.
Thus, upon noting that $[c_1,a,c_3]=(1,[c_2,\alpha])$ and appealing to the description of $\St_B(3)/\St_B(4)$ above, we obtain
\begin{align*}
&\frac{\langle   [c_3,\alpha],[c_2,a,c_3],c_1,c_0\rangle^B}{\langle [c_3,\alpha], [c_2,a,c_3],[c_1,\alpha,c_2],c_0\rangle^B}\\
&\qquad\cong \langle c_1\rangle\times \langle c_1^a\rangle\times \langle [c_1,\alpha]\rangle\times \langle [c_1,\alpha]^a\rangle\\
&\qquad\qquad\times \langle [c_1,a,c_3]\rangle\times \langle [c_1,a,c_3]^a\rangle\times \langle [c_1,a,c_3]^{\alpha^a}\rangle\times \langle [c_1,a,c_3]^{\alpha^a a}\rangle\\
&\qquad\qquad\times \frac{\langle [c_1,c_3^a,c_2^\alpha]\rangle}{\langle [c_1,c_3^a,c_2^\alpha]^2\rangle}\times \frac{\langle [c_1,c_3^a,c_2^\alpha]^a\rangle}{\langle ([c_1,c_3^a,c_2^\alpha]^a)^2\rangle} \times \frac{\langle [c_1,c_3^a,c_2^\alpha]^\alpha\rangle}{\langle ([c_1,c_3^a,c_2^\alpha]^\alpha)^2\rangle}\times \frac{\langle [c_1,c_3^a,c_2^\alpha]^{\alpha a}\rangle}{\langle ([c_1,c_3^a,c_2^\alpha]^{\alpha a})^2\rangle}\\
&\qquad\qquad\times \frac{\langle [c_1,c_3^a,c_2^\alpha]^{c_3^a}\rangle}{\langle ([c_1,c_3^a,c_2^\alpha]^{c_3^a})^2\rangle}\times \frac{\langle [c_1,c_3^a,c_2^\alpha]^{c_3^a a}\rangle}{\langle ([c_1,c_3^a,c_2^\alpha]^{c_3^a a})^2\rangle}\times \frac{\langle [c_1,c_3^a,c_2^\alpha]^{c_3^a\alpha}\rangle}{\langle ([c_1,c_3^a,c_2^\alpha]^{c_3^a\alpha})^2\rangle}\times \frac{\langle [c_1,c_3^a,c_2^\alpha]^{c_3^a \alpha a}\rangle}{\langle ([c_1,c_3^a,c_2^\alpha]^{c_3^a \alpha a})^2\rangle}\\
&\qquad\cong C_2\times \overset{16}\dots\times C_2.
\end{align*}
Hence 
$\St_B(5)=\langle   [c_3,\alpha],[c_2,a,c_3],[c_1,\alpha,c_2],c_0\rangle^B$.

Lastly, for $\St_B(6)$, we have
\begin{align*}
\frac{\St_B(6)}{\langle    [c_3,\alpha],[c_2,a,c_3],[c_1,\alpha,c_2],[c_0,c_3^a,c_2]\rangle^B}&\le \frac{\St_B(5)}{\langle   [c_3,\alpha],[c_2,a,c_3],[c_1,\alpha,c_2],[c_0,c_3^a,c_2]\rangle^B}\\
&=\frac{\langle   [c_3,\alpha],[c_2,a,c_3],[c_1,\alpha,c_2],c_0\rangle^B}{\langle   [c_3,\alpha],[c_2,a,c_3],[c_1,\alpha,c_2],[c_0,c_3^a,c_2]\rangle^B}.
\end{align*}
Likewise note that $[c_0,a]=(c_1,c_1)\in \St_B(5)\backslash \St_B(6)$ and $[c_0,\alpha]=([c_1,a],1)\in \St_B(5)\backslash \St_B(6)$, both of which have order~2. As in the previous case $[c_0,a,a]=1=[c_0,\alpha,\alpha]$  and   $[c_0,a,\alpha]=[c_0,\alpha]$. Note that $[c_0,c_3^a]=([c_1,\alpha],1)=[c_0,a,c_3]^a$ has order 2. Since $[c_1,a,c_3]^2\in\langle  [c_1,\alpha,c_2]\rangle^B$,   we have $ [c_0,\alpha,c_2]^2\in\langle  [c_0,c_3^a,c_3]\rangle^B$. Hence, again appealing to the description of $\St_B(4)/\St_B(5)$ above, and also to Lemma~\ref{lem:relations}, we get
\begin{align*}
&\frac{\langle   [c_3,\alpha],[c_2,a,c_3],[c_1,\alpha,c_2],c_0\rangle^B}{\langle [c_3,\alpha], [c_2,a,c_3],[c_1,\alpha,c_2],[c_0,c_3^a,c_2]\rangle^B}\\
&\cong \langle c_0\rangle\times \langle c_0^a\rangle\times \langle [c_0,\alpha]\rangle\times \langle [c_0,\alpha]^a\rangle\\
&\,\times  \langle [c_0,c_3^a]\rangle\times  \langle [c_0,c_3^a]^a\rangle\times  \langle [c_0,c_3^a]^\alpha\rangle\times  \langle [c_0,c_3^a]^{\alpha a}\rangle\\
&\,\times \langle [c_0,\alpha,c_2]\rangle\times \langle [c_0,\alpha,c_2]^a\rangle\times \langle [c_0,\alpha,c_2]^\alpha\rangle\times \langle [c_0,\alpha,c_2]^{\alpha a}\rangle\\
&\,\times \langle [c_0,\alpha,c_2]^{c_3^{a\alpha}}\rangle\times \langle [c_0,\alpha,c_2]^{c_3^{a\alpha}a}\rangle\times \langle [c_0,\alpha,c_2]^{c_3^{a\alpha}\alpha}\rangle\times \langle [c_0,\alpha,c_2]^{c_3^{a\alpha}\alpha a}\rangle\\
&\,\times \frac{\langle [c_0,c_2^{\alpha},c_1^{c_3^a}]\rangle}{\langle [c_0,c_2^{\alpha},c_1^{c_3^a}]^2\rangle}\times \frac{\langle [c_0,c_2^{\alpha},c_1^{c_3^a}]^a\rangle}{\langle ([c_0,c_2^{\alpha},c_1^{c_3^a}]^a)^2\rangle}\times \frac{\langle [c_0,c_2^{\alpha},c_1^{c_3^a}]^\alpha\rangle}{\langle ([c_0,c_2^{\alpha},c_1^{c_3^a}]^\alpha)^2\rangle}\times \frac{\langle [c_0,c_2^{\alpha},c_1^{c_3^a}]^{\alpha a}\rangle}{\langle ([c_0,c_2^{\alpha},c_1^{c_3^a}]^{\alpha a})^2\rangle}\\
&\,\times \frac{\langle [c_0,c_2^{\alpha},c_1^{c_3^a}]^{c_3^a}\rangle}{\langle ([c_0,c_2^{\alpha},c_1^{c_3^a}]^{c_3^a})^2\rangle}\times \frac{\langle [c_0,c_2^{\alpha},c_1^{c_3^a}]^{c_3^a a}\rangle}{\langle ([c_0,c_2^{\alpha},c_1^{c_3^a}]^{c_3^a a})^2\rangle}\times \frac{\langle [c_0,c_2^{\alpha},c_1^{c_3^a}]^{c_3^a \alpha }\rangle}{\langle ([c_0,c_2^{\alpha},c_1^{c_3^a}]^{c_3^a \alpha })^2\rangle}\times \frac{\langle [c_0,c_2^{\alpha},c_1^{c_3^a}]^{c_3^a \alpha  a}\rangle}{\langle ([c_0,c_2^{\alpha},c_1^{c_3^a}]^{c_3^a \alpha  a})^2\rangle}\\
&\,\times \frac{\langle [c_0,c_2^{\alpha},c_1^{c_3^a}]^{c_2^\alpha}\rangle}{\langle ([c_0,c_2^{\alpha},c_1^{c_3^a}]^{c_2^\alpha})^2\rangle}\times \frac{\langle [c_0,c_2^{\alpha},c_1^{c_3^a}]^{c_2^\alpha a}\rangle}{\langle ([c_0,c_2^{\alpha},c_1^{c_3^a}]^{c_2^\alpha a})^2\rangle}\times \frac{\langle [c_0,c_2^{\alpha},c_1^{c_3^a}]^{c_2^\alpha\alpha}\rangle}{\langle ([c_0,c_2^{\alpha},c_1^{c_3^a}]^{c_2^\alpha\alpha})^2\rangle}\times \frac{\langle [c_0,c_2^{\alpha},c_1^{c_3^a}]^{c_2^\alpha\alpha a}\rangle}{\langle ([c_0,c_2^{\alpha},c_1^{c_3^a}]^{c_2^\alpha\alpha a})^2\rangle}\\
&\,\times \frac{\langle [c_0,c_2^{\alpha},c_1^{c_3^a}]^{c_2^\alpha c_3^a}\rangle}{\langle ([c_0,c_2^{\alpha},c_1^{c_3^a}]^{c_2^\alpha c_3^a})^2\rangle}\times \frac{\langle [c_0,c_2^{\alpha},c_1^{c_3^a}]^{c_2^\alpha c_3^a a}\rangle}{\langle ([c_0,c_2^{\alpha},c_1^{c_3^a}]^{c_2^\alpha c_3^a a})^2\rangle}\times \frac{\langle [c_0,c_2^{\alpha},c_1^{c_3^a}]^{c_2^\alpha c_3^a\alpha}\rangle}{\langle ([c_0,c_2^{\alpha},c_1^{c_3^a}]^{c_2^\alpha c_3^a\alpha})^2\rangle}\times \frac{\langle [c_0,c_2^{\alpha},c_1^{c_3^a}]^{c_2^\alpha c_3^a\alpha a}\rangle}{\langle ([c_0,c_2^{\alpha},c_1^{c_3^a}]^{c_2^\alpha c_3^a\alpha a})^2\rangle}\\
&\cong C_2\times \overset{32}\dots\times C_2.
\end{align*}
As before we see that the first-level projections of half of the above 32 generators yield all the generators of $\St_B(4)\backslash\St_B(5)$ in the left component with the trivial element in the right component, and the remaining 16 generators yield all the generators of $\St_B(4)\backslash\St_B(5)$ in the right component with the trivial element in the left component. Therefore we obtain that 
$\St_B(6)=\langle   [c_3,\alpha],[c_2,a,c_3],[c_1,\alpha,c_2],[c_0,c_3^a,c_2]\rangle^B$.
\end{proof}

\begin{prop}\label{prop:csp}
    Let $B$ be the second Basilica of the Grigorchuk--Erschler group. Then $B$ has the congruence subgroup property.
\end{prop}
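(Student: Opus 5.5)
The plan is to use the standard criterion for regular branch groups: if $B$ is regular branch over a subgroup $K$ and $K$ contains some level stabiliser $\St_B(n)$, then $B$ has the congruence subgroup property. The reason is as follows. Let $L$ be a finite-index subgroup of $B$. Then $L$ contains a finite-index normal subgroup $N$ of $B$. A standard argument for weakly branch groups, using that a non-trivial normal subgroup contains $\Rst_B(m)'$ for some $m$ (Grigorchuk's lemma), shows that $N$ contains $K'\times\overset{2^m}\dots\times K'$ for some $m$. So it suffices to show that
\begin{enumerate}
\item[(a)] $K'$ contains some $\St_B(r)$, and
\item[(b)] $\psi_m^{-1}(\St_B(r)\times\cdots\times\St_B(r))$ contains $\St_B(r+m)$.
\end{enumerate}
Part (b) holds by definition, since an element of $\St_B(r+m)$ has all sections at level $m$ lying in $\St_B(r)$ by self-similarity. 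Hence everything reduces to showing that $K'$, and therefore $K$, contains a level stabiliser.

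We take $K=B'$, which is allowed by Lemma~\ref{lemma branch}. The first step is to show $\St_B(n_0)\le B'$ for some small $n_0$. We have $|B:B'|\le 2^6$ because $B$ is generated by six involutions. On the other hand, Proposition~\ref{prop:stabilisers} shows $\St_B(3)=\langle [c_3,\alpha],c_2,c_1,c_0\rangle^B$, and deeper stabilisers have similar normal generating sets. Each of $[c_3,\alpha]$, $[c_2,a,c_3]$, $[c_1,\alpha,c_2]$ and $[c_0,c_3^a,c_2]$ is a commutator. Therefore $\St_B(6)=\langle [c_3,\alpha],[c_2,a,c_3],[c_1,\alpha,c_2],[c_0,c_3^a,c_2]\rangle^B\le B'$, since $B'$ is normal. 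This gives $\St_B(6)\le B'$.

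The second step, which I expect to be the main obstacle, is passing from $B'$ to $B''$: we need $\St_B(r)\le B''$ for some $r$. First I would show that $B'/B''$ is finitely generated abelian and that $B''$ contains $B''\times B''$ geometrically. Since $B'\times B'\le B'$, we get $B''\ge [B'\times B', B'\times B'] = B''\times B''$. Next I would show that $B''$ contains $\psi_1^{-1}(B'\times B')$ intersected with something, more precisely that $B''\ge \psi_k^{-1}(B'\times\cdots\times B')$ for some $k$. For this I would take the explicit commutator formulas in the proof of Lemma~\ref{lemma branch}, which show that generators of $B'\times 1$ are images of commutators such as $[c_i,c_3^a]$ and $[c_i,\alpha]$. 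Commutating these with suitable further elements, for instance $[c_3,\alpha]=(c_1,c_1,1,1)$, should produce each generator of $B'$ at a deeper vertex as an element of $B''$. Combining this with $\St_B(6)\le B'$ gives $\St_B(6+k)\le \psi_k^{-1}(\St_B(6)\times\cdots)\le B''$.

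If this direct computation turns out to be messy, the fallback is the known criterion, as used by Fern\'andez-Alcober et al.\ and Bartholdi--Siegenthaler--Zalesskii, that a regular branch group over $K$ has the congruence subgroup property as soon as $K$ contains a level stabiliser and $K'\ge \psi_k^{-1}(K\times\cdots\times K)$ for some $k$. Here the key verification is the explicit containment $\psi_1^{-1}(B'\times B')\le$ some product involving $B''$, shown through the relations of Lemma~\ref{lem:relations}.
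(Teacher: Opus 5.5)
Your reduction is sound and is the same one the paper uses. You take $K=B'$ (Lemma~\ref{lemma branch}) and note $\St_B(6)\le B'$, since the four normal generators in Proposition~\ref{prop:stabilisers} are commutators. Then the congruence subgroup property follows once $B''$ contains a level stabiliser. The paper cites this criterion rather than deriving it from Grigorchuk's lemma, but your derivation, including part (b), is correct. Your intermediate target $B''\ge\psi_k^{-1}(B'\times\cdots\times B')$ is also legitimate: given $\St_B(6)\le B'$, it is equivalent up to a shift of levels to $B''$ containing a level stabiliser.

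\textbf{The gap.} The one non-formal step is never carried out. The sentence ``commutating these with suitable further elements \dots\ should produce each generator of $B'$ at a deeper vertex as an element of $B''$'' is an expectation, not an argument. Yet it is the entire content of the proposition; everything else is the standard criterion. To close it you need explicit commutator identities. Your suggested tool is also too weak on its own: $[c_3,\alpha]\in\St_B(6)$, so commutators with it stay inside $\St_B(6)$. You must also use shallower elements of $B'$, such as
\[
[c_1,a]=\psi_2^{-1}(c_3,1,c_3,1),\quad [c_2,\alpha]=\psi_2^{-1}(c_0\alpha,\alpha c_0,1,1),\quad [c_3^a,\alpha]=\psi_2^{-1}(a,a,1,1).
\]

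\textbf{A more economical target.} You do not need every normal generator of $B'$. Since $B''$ is normal and $B$ is fractal, it suffices to show $\psi_2^{-1}(y,1,1,1)\in B''$ for the four normal generators $y$ of $\St_B(6)$. This gives $B''\ge\psi_2^{-1}(\St_B(6)\times\cdots\times\St_B(6))\ge\St_B(8)$. The paper does exactly this:
\begin{itemize}
\item $[[c_1,a],[c_2,\alpha]]=\psi_2^{-1}([c_3,c_0\alpha],1,1,1)=\psi_2^{-1}([c_3,\alpha],1,1,1)$.
\item $[[c_0,a],[c_3^a,\alpha],[c_1,a]]=\psi_2^{-1}([c_2,a,c_3],1,1,1)$.
\item $[[c_2,a],[c_1,a]^{[c_3^a,\alpha]},[c_0,a]]=\psi_2^{-1}([c_0,c_3^a,c_2],1,1,1)$.
\item For $[c_1,\alpha,c_2]$, the Hall--Witt identity reduces it to a conjugate of $[\alpha,c_2,c_1]^{-1}$. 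That element is obtained by commuting $[[c_2,\alpha],[c_0,a]]=\psi_2^{-1}([\alpha,c_2],1,1,1)$ with $[c_3,\alpha]=\psi_2^{-1}(c_1,c_1,1,1)$.
\end{itemize}
Without identities of this kind, for these elements or for your proposed generators of $B'$, the proposal does not yet show that $B''$ contains any level stabiliser.
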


\begin{proof} 
By \cite[Prop. 2.4]{FAGUA}, and also \cite[Lem.~3.4]{DNT}, it suffices to show that $B'' \geq \St_B(6) \times 1\times 1 \times 1$. From the previous proof, we have $\St_B(6)=\langle   [c_3,\alpha],[c_2,a,c_3],[c_1,\alpha,c_2],[c_0,c_3^a,c_2]\rangle^B$. We obtain the first two normal generators  from
    \begin{align*}
        [[c_1,a],[c_2,\alpha]] &= [(c_2,c_2),([c_3,a],1)] = [(c_3,1,c_3,1),(c_0\alpha,\alpha c_0,1,1)]\\
            & = ([c_3,c_0\alpha],1,1,1) = ([c_3,\alpha],1,1,1),    \\
            [[c_0,a],[c_3^{a},\alpha],[c_1,a]] &= [(c_1,c_1),([\alpha,a],1),(c_2,c_2)] = [(c_2,1,c_2,1),(a,a, 1,1),(c_3,1,c_3,1)]\\
            & = ([c_2,a,c_3],1,1,1).
            \end{align*}
For the next normal generator $[c_1,\alpha,c_2]$, we deduce from the Hall-Witt identity that 
\[
[c_1,\alpha,c_2]^{\alpha^{-1}}=([\alpha^{-1},c_2^{-1},c_1]^{-1})^{c_2}=([\alpha,c_2,c_1]^{-1})^{c_2}.
\]
Then from 
\begin{align*}
     [[c_2,\alpha],[c_0,a]] &= [([c_3,a],1),(c_1,c_1)] = [(c_0\alpha,\alpha c_0,1,1),(c_2,1,c_2,1)]\\
            & = ([c_0\alpha,c_2],1,1,1) = ([\alpha,c_2],1,1,1),
\end{align*}
we obtain
\[
 [[c_2,\alpha],[c_0,a],[c_3,\alpha]]^{[c_0,a]c_2^\alpha} = ([\alpha,c_2,c_1]^{c_2\alpha},1,1,1).
\]
Lastly, consider
\begin{align*}
        [[c_2,a],[c_1,a]^{[c_3^a,\alpha]}] &= [(c_0,\alpha,c_0,\alpha),(c_3,1, c_3,1)^{(a,a,1,1)}]\\
        &=[(c_0,\alpha,c_0,\alpha),(c_3^a,1, c_3,1)]\\
          &=([c_0,c_3^a],1 ,1,1),
 \end{align*}
so we obtain the final normal generator via
\[
[[c_2,a],[c_1,a]^{[c_3^a,\alpha]},[c_0,a]]= ([c_0,c_3^a,c_2],1 ,1,1).\qedhere
\]
\end{proof}

\smallskip

Next we compute the Hausdorff dimension of the closure $\overline{B}$ of $B$ in $\Aut T$. The formula for the Hausdorff dimension for Basilica groups given in \cite[Lem.~4.13 and Prop.~4.15]{PR} does not apply, since the Grigorchuk--Erschler group would have to be very close to being abelian. Instead, we use the previous result to compute the Hausdorff dimension of $B$ using \u{S}uni\'{c}'s method \cite[Prop.~6]{Sunic}.

\begin{prop}
    \label{prop:hausdorff-dimension}
   Let $B$ be the second Basilica of the Grigorchuk--Erschler group. The Hausdorff dimension of the closure $\overline{B}$ of $B$ in $\Aut T$ is $\frac{31}{32}$.
\end{prop}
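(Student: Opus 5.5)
The plan is to compute the Hausdorff dimension through the standard formula
\[
\dim_H \overline{B} = \liminf_{n\to\infty} \frac{\log_2 |B:\St_B(n)|}{2^n-1},
\]
valid for closed subgroups of $\Aut T$ on the binary tree (the denominator is $\log_2|\Aut T:\St_{\Aut T}(n)|$). By Proposition~\ref{prop:stabilisers}, $\log_2|B:\St_B(n)| = 2^n-1$ for $n\le 6$: up to level~$6$ the closure of $B$ looks like all of $\Aut T$. The dimension drop therefore has to come from levels $n\ge 7$, and I would use the branching structure to get a recursion for $\log_2|B:\St_B(n)|$ for $n\ge 6$. This is \v{S}uni\'{c}'s method \cite[Prop.~6]{Sunic}: if $B$ is regular branch over a subgroup $K$ containing some level stabiliser, then the indices obey a linear recursion of the form $\log_2|B:\St_B(n+1)| = 2\log_2|B:\St_B(n)| + \text{const}$ for large~$n$, and the dimension follows by a geometric-series computation.

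First I would identify a suitable branching subgroup containing a level stabiliser. The proof of Proposition~\ref{prop:csp} shows that $\St_B(6)\times 1\times1\times1 \le B''$ at level~$2$. Lemma~\ref{lemma branch} gives that $B$ is regular branch over $B'$, so the same holds over $B''$ up to finite index issues. The cleanest route is to take $K=\St_B(6)$ and check that $\psi_1(\St_B(7))=\St_B(6)\times\St_B(6)$. The inclusion $\subseteq$ is clear. For $\supseteq$, we need $\St_B(6)\times 1\le B$. This follows from $\St_B(6)\times1\times1\times1\le B$ (Proposition~\ref{prop:csp}) together with strong fractality and the relation $(x,1)=\psi_1$ of an element of $B$. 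Alternatively, I would rederive it directly from the normal generators of $\St_B(6)$ listed in Proposition~\ref{prop:stabilisers}: each generator is of the form $(y,1)$ with $y$ the previous generator, as displayed in that proof, e.g.\ $[c_2,a,c_3]=(1,[c_3,\alpha])$ and $[c_1,\alpha,c_2]=([c_2,a,c_3],1)$. One must still produce $([c_3,\alpha],1)$ and $([c_0,c_3^a,c_2],1)$-type elements, and then normal closure under $B$ acts on first-coordinate sections like all of $B$ by fractality.

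Granting $\St_B(n+1)\cong \St_B(n)\times\St_B(n)$ for all $n\ge 6$ (by induction via $\psi_1$, using $\St_B(n+1)=\psi_1^{-1}(\St_B(n)\times\St_B(n))\cap\St_B(6)$-style arguments), we get
\[
|B:\St_B(n+1)| = |B:\St_B(7)|\cdot |\St_B(6):\St_B(n)|^2 .
\]
Since $|B:\St_B(7)| = |B:\St_B(6)|\cdot|\St_B(6):\St_B(7)| = 2^{63}\cdot |B:\psi_1^{-1}(\St_B(6)^2)\cap \St_B(6)|/2^{63}$, the key number is $\log_2|\St_B(6):\St_B(7)|$. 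If $\psi_1(\St_B(6))$ were all of $\St_B(5)\times\St_B(5)$ it would be $64$. I expect it to be $62$, i.e.\ a defect of~$2$ coming from the fact that $\psi_1(\St_B(6))$ has index $4$ in $\St_B(5)\times\St_B(5)$. This gives $\log_2|B:\St_B(7)|=125$. Writing $a_n=\log_2|B:\St_B(n)|$, the recursion $a_{n+1}=2a_n - 2\cdot 63 + a_7$ for $n\ge 7$ (from $a_{n+1}=a_7+2(a_n-a_6)$) gives $a_n = 2^{n-7}(a_7-2a_6)+\text{const}$ asymptotically. Then
\[
\dim_H\overline{B} = \lim_n \frac{a_n}{2^n} = \frac{a_7 - 2a_6}{2^7} + \frac{2a_6}{2^7}\cdot\tfrac12\cdot\ldots
\]
Computing this properly as $\lim a_n/2^n = (a_7-a_6)/2^6\cdot\frac12 + \ldots$ should yield $31/32$, which pins down the defect: $1-\dim = 2^{-5}$.

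The main obstacle is computing $|\St_B(6):\St_B(7)|$ exactly, equivalently the index of $\psi_1(\St_B(6))$ in $\St_B(5)\times\St_B(5)$. I would attack it by mimicking the proof of Proposition~\ref{prop:stabilisers}: list sections of the normal generators of $\St_B(6)$ modulo $\St_B(6)\times\St_B(6)$, and use the order-$4$ phenomena observed there (e.g.\ $[c_3,a]$, $[c_3,a,\alpha]$) to find exactly which elements of $\St_B(5)/\St_B(6)$ fail to appear independently in the two coordinates. The lost generators should be those whose squares were absorbed into deeper normal generators.
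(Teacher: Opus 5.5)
Your framework (\v{S}uni\'{c}'s recursion with stabilisation level $6$) is the same as the paper's, but there is a genuine gap exactly where the answer is decided.

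Beyond $a_n:=\log_2|B:\St_B(n)|=2^n-1$ for $n\le 6$, the decisive input is the defect $\log_2|\St_B(6):\St_B(7)|=62$. Equivalently, $|\St_B(5)\times\St_B(5):\psi_1(\St_B(6))|=4$. You never derive this. You say you \emph{expect} it, and then note that it is what produces $31/32$; that is circular. The computation you propose instead (sections of the normal generators of $\St_B(6)$) is a deep-level calculation that you do not carry out. Your final limit is also left garbled: from $a_{n+1}=a_7+2(a_n-a_6)$ one gets exactly $\lim_n a_n/2^n=(a_7-a_6)/2^6$.

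There is also a second problem. Deriving $\St_B(6)\times 1\le\psi_1(B)$ from the level-two statement in the proof of Proposition~\ref{prop:csp} does not work as written. For $x\in\St_B(6)$, the level-two sections of $(x,1)$ lie only in $\St_B(5)$. The clean route, which the paper uses, is $\St_B(6)\le B'$ together with $B'\times B'\le\psi_1(B')$ from Lemma~\ref{lemma branch}.

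The missing idea is that the defect is a \emph{first-level} quantity. For $n\ge 6$ you have $\psi_1(\St_B(n+1))=\St_B(n)\times\St_B(n)\le\psi_1(\St_B(1))$, so
\[
a_{n+1}=1+\log_2|\psi_1(\St_B(1)):\St_B(n)\times\St_B(n)|=1+2a_n-t,\qquad t:=\log_2|B\times B:\psi_1(\St_B(1))|.
\]
Hence $\dim_H\overline{B}=(a_6-t+1)/2^6$. This is \v{S}uni\'{c}'s formula with $r=63$ and $s=6$, as in the paper, so everything reduces to showing $t=2$.

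That is a short computation:
\begin{itemize}
\item $\psi_1(\St_B(1))$ is generated by $(a,1),(1,a),(c_i,1),(1,c_i)$ for $i=1,2,3$, together with $(c_0,\alpha)$ and $(\alpha,c_0)$.
\item It therefore contains $N\times N$, where $N=\langle a,c_1,c_2,c_3\rangle^B$. Indeed, conjugating by $(c_0,\alpha)$ and $(\alpha,c_0)$ acts on first coordinates by $c_0$ and $\alpha$.
\item $B/N\cong C_2\times C_2$, generated by the images of $\alpha$ and $c_0$. It is abelian because $[\alpha,c_0]=([c_1,a],1)=[a,c_0]\,[a,c_0]^{\alpha}\in N$.
\item It has order $4$ because the homomorphisms $\Aut T\to C_2$ recording the parity of the number of level-$1$ (respectively level-$5$) vertices at which an element swaps the children both kill $a,c_1,c_2,c_3$ and separate $\alpha$ from $c_0$.
\item The image of $\psi_1(\St_B(1))$ in $(B/N)^2\cong C_2^4$ is spanned by $(\bar c_0,\bar\alpha)$ and $(\bar\alpha,\bar c_0)$, so it has index $4$ and $t=2$.
\end{itemize}
Therefore $\dim_H\overline{B}=62/64=31/32$. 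As a consistency check, this gives $a_7=125$, which confirms your index-$4$ guess.
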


\begin{proof}
    We use  \cite[Prop. 6]{Sunic} and we adopt the notation given there; that is, let $r,s,t$ be non-negative integers such that 
    \begin{enumerate}
        \item [$\bullet$] $\psi_1(\St_B(n+1))=\St_B(n)\times \St_B(n)$ for every $n\ge s$;
      \item [$\bullet$] $\log_2|G:\St_B(s)|=r$;
      \item [$\bullet$] $\log_2|G\times G:\St_B(1)|=t$.
     \end{enumerate}
     Then by \cite[Prop. 6(b)]{Sunic}, the Hausdorff dimension of $\overline{B}$ in $\Aut T$ is  $\tfrac{r-t+1}{2^s}$. 
     
     Now, it follows from the description of level stabilisers in Proposition~\ref{prop:stabilisers} that $\St_B(6) \leq B'$, and since $B$ is regular branch over $B'$, by \cite[Lem.~10]{Sunic}, we get 
    \[
    \psi_1(\St_B(n+1)) = \St_B(n) \times \St_B(n)\]
    for $n \geq 6$. So we set $s=6$ in \u{S}uni\'{c}'s formula. It follows again from Proposition \ref{prop:stabilisers} that $[B:\St_B(6)] = 2^{63}$. Thus $r = 63$ in \u{S}uni\'{c}'s formula. Now it suffices to find the index of $\St_B(1)$ in $B \times B$. We consider the quotient of $B\times B$ by $\St_B(1)$. Note that  $B \times B$ is generated by the set $\{(x,1),(1,y) \mid x,y \in \{a, \alpha, c_0,c_1,c_2,c_3\}\}$. It follows from the description of $\St_B(1)$ that  
    \[ 
    \frac{B\times B}{\St_B(1)}  = \frac{\langle (\alpha,1),(1,\alpha),(c_0,1),(1,c_0) \rangle \St_B(1)}{\St_B(1)}.
    \]
Since $c_3 = (c_0,\alpha) \in \St_B(1)$,  we have $(c_0,1) \equiv (\alpha,1) \pmod {\St_B(1)}$ and also $(1,c_0) \equiv (1,\alpha) \pmod {\St_B(1)}$. Therefore,
 \[ 
    \frac{B\times B}{\St_B(1)}  = \frac{\langle (\alpha,1),(1,\alpha) \rangle \St_B(1)}{\St_B(1)}.
    \]
We claim that $(\alpha, 1) \notin \St_B(1)$.  Indeed, if it were, then it would be in $\St_B(2)\backslash\St_B(3)$. Hence we would have the image of $(\alpha,1)$ in the quotient group $\St_B(2)/\St_B(3)$. From the proof of Proposition~\ref{prop:stabilisers}, we clearly see that this is not the case. Note further that  $(\alpha,1)$ and $(1,\alpha)$ are not equivalent modulo $\St_B(1)$. So we get that 
\[ 
    \frac{B\times B}{\St_B(1)}  \cong C_2 \times C_2.
    \]
Thus $|B \times B : \St_B(1)| = 2^2$ and so $t=2$ in \u{S}uni\'{c}'s formula.  The Hausdorff dimension of the closure $\overline{B}$ of $B$ in $\Aut T$ is given by 
\[
\frac{r-t+1}{2^s} = \frac{63-2+1}{2^6} = \frac{62}{2^6} = \frac{31}{32}.\qedhere
\]
\end{proof}

\smallskip

\subsection{A rigidity result about the automorphism group of $B$}

Recall that a group $G\le \Aut T$ is said to be \emph{saturated} if for any  $n\in\mathbb{N}$ there exists a subgroup $H_n \leq \St_G(n)$ that is characteristic in~$G$ and acts spherically transitively on every $n$th level subtree. If $G$ is saturated, then \cite[Thm.~7.5]{LN} yields that $\Aut G=N_{\Aut T}(G)$. Examples of saturated groups acting on rooted trees are, among others, the first Grigorchuk group~\cite{LN}, the second Grigorchuk group~\cite{NT}, the $p$-Basilica groups~\cite{DNT}, the branch multi-EGS groups~\cite{TUA}, and the generalised Brunner-Sidki-Vieira groups~\cite{FAGP}.

We first record some useful results.

 \begin{lemma}
     \label{lem:gamma-3}
    Let $B$ be the second Basilica of the Grigorchuk--Erschler group. Then $\gamma_3(B)\le\St_B(2)$.
 \end{lemma}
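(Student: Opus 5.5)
Since $\gamma_3(B)=[B',B]$ is the normal closure in $B$ of the commutators $[x,y,z]$ with $x,y,z$ ranging over the generators $a,\alpha,c_0,c_1,c_2,c_3$, and $\St_B(2)$ is normal in $B$, it is enough to show that every such basic commutator $[x,y,z]$ lies in $\St_B(2)$. By Proposition~\ref{prop:stabilisers} we have $\St_B(2)=\langle c_3,c_2,c_1,c_0\rangle^B$. Hence it is natural to pass to the quotient $\bar B=B/\St_B(2)$ and show that $\bar B$ has nilpotency class at most $2$. In $\bar B$ all the $c_i$ vanish, so $\bar B$ is generated by the images $\bar a,\bar\alpha$. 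By Proposition~\ref{prop:stabilisers}, $|\bar B|=2^3$.

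The key computation is as follows. Both $a$ and $\alpha$ are involutions. The commutator $[a,\alpha]=(a,a)$ lies in $\St_B(1)\setminus\St_B(2)$, and, as already noted in the proof of Proposition~\ref{prop:stabilisers}, it is central in $\langle a,\alpha\rangle$. Indeed, conjugating $(a,a)$ by $a$ swaps its two equal coordinates, and conjugating by $\alpha=(a,1)$ fixes it. Therefore $\langle a,\alpha\rangle$ is a group generated by two involutions whose commutator is central. This means it is a dihedral group of order $8$, in particular of class $2$. Its image $\bar B$ therefore has class at most $2$, and so $\gamma_3(\bar B)=1$, that is, $\gamma_3(B)\le \St_B(2)$.

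Alternatively, to avoid quotient arguments, one can check directly that $[a,\alpha,a]=[(a,a),a]=1$ and $[a,\alpha,\alpha]=1$, while any triple commutator involving some $c_i$ lies in $\St_B(2)$ by normality. The latter holds because $[c_i,y]\in\langle c_i\rangle^B\le \St_B(2)$, and any commutator whose first two entries are $a,\alpha$ and whose third entry is some $c_i$ lies in $\St_B(2)$ for the same reason.

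The only step requiring any care is justifying that $\gamma_3(B)$ is normally generated by the triple commutators of generators. This is standard: modulo the normal subgroup $N$ generated by these triple commutators, every commutator of generators is central, so $B/N$ has class at most $2$. I do not expect a genuine obstacle here.
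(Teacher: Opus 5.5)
Your proof is correct and is essentially the paper's argument: modulo $\St_B(2)\supseteq\langle c_0,c_1,c_2,c_3\rangle^B$ the group is generated by $a$ and $\alpha$, and $[a,\alpha]=(a,a)$ commutes with both; this is exactly the paper's check $[\alpha,a,a]=[\alpha,a,\alpha]=1$, and your ``alternative'' paragraph is that check stated directly. You only need the easy inclusion $c_i\in\St_B(2)$ (immediate from the recursive definitions), not the full equality from Proposition~\ref{prop:stabilisers}, nor $|B:\St_B(2)|=2^3$.
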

 
 \begin{proof}
     From Proposition~\ref{prop:stabilisers}, we have that $\St_B(2)=\langle c_3,c_2,c_1,c_0\rangle^B$. Hence it suffices to observe that $[\alpha,a,a]=[\alpha,a,\alpha]=1$.
 \end{proof}

\begin{theorem}\label{thm:rigidity}
The second Basilica group $B$ of the Grigorchuk--Erschler group is saturated, and hence $\Aut B=N_{\Aut T}(B)$.
\end{theorem}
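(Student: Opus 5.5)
We construct characteristic subgroups by iterating the lower central series. Define $H_0 = B$ and, following the pattern used for the first Grigorchuk group and for the $p$-Basilica groups, let $H_{n+1}$ be a suitable verbal term of $H_n$ such as $\gamma_3(H_n)$. Verbal subgroups of characteristic subgroups are characteristic, so every $H_n$ is characteristic in $B$. Two things must then be checked: that $H_n\le \St_B(n)$ (or a suitable rescaling of the levels), and that $H_n$ acts spherically transitively on every subtree rooted at level $n$.

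\emph{Containment.} Lemma~\ref{lem:gamma-3} gives $\gamma_3(B)\le \St_B(2)$. This suggests taking $H_n=\gamma_3(H_{n-1})$ and proving $H_n\le \St_B(2n)$, which suffices since $\St_B(2n)\le\St_B(n)$. The induction uses sections. If $H_{n-1}\le \St_B(2n-2)$ and all sections of $H_{n-1}$ at level $2n-2$ lie in $B$, then the sections of $\gamma_3(H_{n-1})$ at level $2n-2$ lie in $\gamma_3$ of the coordinate projections. By Lemma~\ref{lem:gamma-3} these fix two further levels, giving $H_n\le\St_B(2n)$. Working with $\gamma_3$ rather than the derived subgroup matters here: Lemma~\ref{lem:derived-contained-st} only gains one level per step, and $B'\not\le\St_B(1)$ fails to give a uniform gain at the first step.

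\emph{Spherical transitivity.} This is the main obstacle. The plan is to show that $\gamma_3(B)$ contains, in each level-$2$ section coordinate, a subgroup that projects onto something acting transitively. Concretely, I would aim to prove
\[
\gamma_3(B)\ \ge\ \psi_2^{-1}(L\times L\times L\times L)
\]
for some subgroup $L\le B$ containing $\gamma_3(B)$ that acts spherically transitively on $T$. If one can take $L=B'$ (or a subgroup of it containing $\gamma_3(B)$), the induction closes. This would show $H_n$ contains $\psi_{2n}^{-1}(L\times\cdots\times L)$, which is transitive on every subtree at level $2n$ and hence on each level-$n$ subtree.

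To produce such $L$, I would use the explicit commutator computations from the proofs of Lemma~\ref{lemma branch} and Proposition~\ref{prop:csp}. Those computations already give elements such as $([c_3,\alpha],1,1,1)$ and $([c_2,a,c_3],1,1,1)$ inside $B''\le\gamma_3(B)$. Transitivity of a subgroup $L$ on $T$ reduces, via fractality-type arguments, to two conditions: $L$ contains an element acting as $a$ at the root, and the first-level stabiliser of $L$ projects onto a transitive group again. The delicate point is that $B'$ contains no element acting nontrivially at the root except through products like $[a,\alpha]=(a,a)$. So I expect to need $L$ to be something like $\langle [a,\alpha],\dots\rangle^B\cap\gamma_3$-closure.

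If this fails, the fallback is to choose a larger characteristic family of subgroups instead of $\gamma_3(B)$. The candidates are $\St_B(2)$-type subgroups defined by a verbal word, for example $B^2 B'$ terms, or $\langle g^2\mid g\in B\rangle$, since $(ac_3)^2$-type squares have transitive sections. For these, the root-action element $(a,a)$-type squares are available directly.

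Once the family $H_n$ is established, saturation holds, and \cite[Thm.~7.5]{LN} immediately yields $\Aut B=N_{\Aut T}(B)$.
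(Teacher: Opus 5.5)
\textbf{There is a genuine gap.} First, the indexing does not fit the definition. You arrange $H_n\le\St_B(2n)$ and then claim that transitivity on the subtrees at level $2n$ gives transitivity ``on each level-$n$ subtree''. This is backwards. For $n\ge 1$, every element of $\St_B(2n)$ fixes $v0$ and $v1$ for each vertex $v$ of level $n$, so $H_n$ cannot be transitive even on the first level of $T_v$.

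Saturation asks, for \emph{every} $n$, for a characteristic $H_n\le\St_B(n)$ that is transitive on all level-$n$ subtrees. So gaining two levels per step is exactly what you must avoid. At best your $\gamma_3$-iterates could witness the even levels, and the odd levels would still be missing. Note also that $B'\le\St_B(1)$ does hold, since $B/\St_B(1)\cong C_2$. Gaining one level per step, as in Lemma~\ref{lem:derived-contained-st}, is precisely what is wanted.

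Second, the spherical transitivity, which is the whole content of the theorem, is only planned, not proved. The proposed induction does not close. From $\gamma_3(B)\ge\psi_2^{-1}(L\times L\times L\times L)$ you only get $\gamma_3(\gamma_3(B))\ge\psi_2^{-1}(\gamma_3(L)\times\cdots\times\gamma_3(L))$. To continue you would need $\gamma_3(L)\ge\psi_2^{-1}(L\times L\times L\times L)$, a different and unverified statement.

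Nor can you argue that the level-$2$ sections of $\gamma_3(B)$ exhaust $B$. Consider the character $B\to C_2$ that is non-trivial exactly on $\alpha$ and $c_0$; it is the product of the sign characters of the actions on levels $2$ and $6$. One checks that all level-$2$ sections of the normal generators $[x,y,z]$ of $\gamma_3(B)$ lie in its kernel. So iterating $\gamma_3$ passes to $\gamma_3$ of a proper subgroup, and you would need a separate self-reproduction argument for that subgroup.

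The paper sidesteps all of this:
\begin{itemize}
\item $H_1=B'$, transitive by $[a,\alpha]=(a,a)$ and regular branching over $B'$.
\item $H_2=\gamma_3(B)$, transitive by $[a,\alpha,c_3^a]=(a,a,c_1,c_1)$ together with $\gamma_3(B)\times\gamma_3(B)\le\gamma_3(B)$.
\item For $n\ge 3$, the power subgroups $H_n=\gamma_3(B)^{2^{n-2}}$. These lie in $\St_B(n)$ by Lemma~\ref{lem:derived-contained-st}, gaining exactly one level per step.
\end{itemize}
They stay transitive because the set $\{c_1a,c_2,c_3,c_0\alpha\}$ reproduces itself under squaring and projecting:
\[
(c_1a)^2=(c_2,c_2),\quad (c_2c_1a)^2=(c_3c_2,c_3c_2),\quad (c_3c_1a)^2=(c_0c_2\alpha,\alpha c_0c_2),\quad (c_0\alpha c_1a)^2=(c_1ac_2,c_1ac_2).
\]
Your squaring fallback points in this direction, but without such computations nothing is established.
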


\begin{proof}
We need to show that,  for any $n\in\mathbb{N}$, there exists a subgroup $H_n \leq \St_{B}(n)$ that is characteristic in~$B$ and acts spherically transitively on every subtree of the $n$th level. For $n=1$, we can take $H_1=B'$. Indeed, from $[a,\alpha]$ we have that $a\in(H_1)_j$ for $j\in\{0,1\}$. As $B$ is regular branch over $B'$, it follows that $a\in(H_1)_v$ for every vertex $v$ of level at least 1. So $H_1=B'$ acts spherically transitively on every first-level subtree.

For $H_2$, we consider $\gamma_3(B)$,  which from Lemma~\ref{lem:gamma-3} is in $\St_B(2)$. From considering $[a,\alpha,c_3^a]$ we get $a\in (\gamma_3(B))_{00}$ and likewise for all vertices at level~2. Since from the proof of Lemma~\ref{lemma branch} we see that $\gamma_3(B)\times\gamma_3(B)\le \gamma_3(B)$, it follows that $a\in(H_2)_v$ for every vertex $v$ of level at least 2.

For $n=3$, consider $H_2^2=\gamma_3(B)^2$. Note that since $a,c_1,c_2,c_3, c_0\alpha\in (\gamma_3(B))_{00}$ from considering $[c_i,a,\alpha]$ for $i\in\{0,1,2,3\}$ and 
$[a,\alpha,c_3^a]$, we have  $(c_1a)^2,(c_2a)^2,(c_3a)^2, (c_0\alpha a)^2\in (\gamma_3(B)^2)_{00}$. Thus $c_2,c_3,c_0\alpha, c_1a\in (\gamma_3(B)^2)_{000}$, which yields that $H_3:=H_2^2$ acts transitively at every subtree rooted at level 3. Note that we also appeal to the fact that $\gamma_3(B)^2\times\gamma_3(B)^2\le \gamma_3(B)^2$.

For $n>3$, we claim that we can set $H_n=H_2^{2^{n-2}}$. Indeed, it suffices to show that $(\gamma_3(B)^4)_{0000}$ contains $c_1a,c_2,c_3,c_0\alpha$, equivalently, that $c_1a,c_2,c_3,c_0\alpha\in(\langle c_2,c_3,c_0\alpha , c_1a\rangle^2)_0$. The result follows from considering $(c_1a)^2,(c_2c_1a)^2,(c_3c_1a)^2,(c_0\alpha c_1a)^2$.

The final statement then follows from~\cite[Thm.~7.5]{LN}.
\end{proof}

\section{Basilica operation preserves dense subgroups}\label{sec:operation-dense}

Here we prove Theorem~\ref{thm:Basilica-dense}, which holds more generally for groups acting on any regular rooted tree.

\begin{proof}[Proof of Theorem~\ref{thm:Basilica-dense}]
Since $G$ has the congruence subgroup property, the condition that the subgroup $H$ is dense in $G$ is equivalent to the condition $H\St_G(k)=G$ for all $k\in \mathbb{N}$. Likewise, since $B_n(G)$ has the congruence subgroup property, for $B_n(H)$ to be dense in $B_n(G)$, it suffices to show that $B_n(H)\St_{B_n(G)}(k)=B_n(G)$ for all $k\in \mathbb{N}$.
    
   This follows from the definition of the Basilica operation. Indeed, if $g$ is a generator of $G$, then $\beta_i^n(g)$ is a generator of $B_n(G)$. Let $h\in H$ be such that $h\equiv g \pmod {\St_G(k)}$. Then for all $i\in \{0,1,\ldots, n-1\}$, we have $\beta_i^n(h)\equiv \beta_i^n(g) \pmod {\St_{B_n(G)}(k)}$. Hence the result.
\end{proof}

\section{Proper dense subgroups}\label{sec:maximal}

Recall the proper dense subgroups $H(q)=\langle (ab)^q,b_0,b_1\rangle$ of $G$ that were considered in \cite{FG}; here $b=b_0b_1$ and $q$ is an odd prime. By Theorem~\ref{thm:Basilica-dense}, it follows that the subgroup $B_2(H(q))=\langle \beta_0((ab)^q), \beta_1((ab)^q), c_0,c_1,c_2,c_3\rangle$ is dense. However due to the generators of $B_2(H(q))$ being more complicated to work with, we instead consider a different dense subgroup, as seen below. In this section, we follow the strategy laid out in~\cite[Sec.~5]{FG}. We begin by recalling the following result of P.-H. Leemann.

\begin{prop}
\label{pro:4.1}\cite[Prop.~4.1]{FG}
Let $\mathcal{T}$ be the $m$-adic tree for some $m\ge 2$. Let $\mathcal{G}\le\Aut \mathcal{T}$ be countably generated by $S=\{g_1,g_2,\ldots\}$. Suppose that $m_1,m_2,\ldots\in\mathbb{N}$ are coprime with $|\mathcal{G}/\St_{\mathcal{G}}(n)|$ for all $n\in\mathbb{N}$. Then $H:=\langle g_1^{m_1},g_2^{m_2},\ldots\rangle$ is a dense subgroup of $\mathcal{G}$ with respect to the congruence topology (i.e. the topology defined via $\St_{\mathcal{G}}(n)$ for all $n\in\mathbb{N}$).
\end{prop}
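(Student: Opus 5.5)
We argue directly from the definition of the congruence topology. Write $N_n:=\St_{\mathcal{G}}(n)$ and $Q_n:=\mathcal{G}/N_n$, a finite group. Density of $H$ in the congruence topology means that $HN_n=\mathcal{G}$ for every $n\in\mathbb{N}$. Equivalently, the image of $H$ in each finite quotient $Q_n$ is all of $Q_n$.

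So fix $n$. It suffices to show that, for each generator $g_i$, the image $\bar g_i\in Q_n$ lies in the image $\bar H$ of $H$. Since the $\bar g_i$ generate $Q_n$, this forces $\bar H=Q_n$.

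The key step is the following observation about a single generator. The order $o_i$ of $\bar g_i$ divides $|Q_n|$, and $m_i$ is coprime to $|Q_n|$. Hence $\gcd(m_i,o_i)=1$, so there exist integers $u,v$ with $um_i+vo_i=1$. Then
\[
\bar g_i=\bar g_i^{\,um_i+vo_i}=\bigl(\bar g_i^{\,m_i}\bigr)^{u}\in \langle \bar g_i^{\,m_i}\rangle\le \bar H .
\]
In other words, $\langle \bar g_i^{\,m_i}\rangle=\langle \bar g_i\rangle$. This gives $\bar H=Q_n$, that is, $HN_n=\mathcal{G}$. As $n$ was arbitrary, $H$ is dense with respect to the congruence topology.

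There is no real obstacle here. The only points to keep straight are the following.
\begin{enumerate}
\item[(i)] The hypothesis is used in the form that each $m_i$ is coprime to $|Q_n|$ for every $n$, and hence coprime to the order of every element of $Q_n$.
\item[(ii)] Countable generation plays no role beyond indexing the generators.
\item[(iii)] The finiteness of each $Q_n$ is what gives $\bar g_i$ a finite order. This holds because $\mathcal{G}/\St_{\mathcal{G}}(n)$ embeds in the finite group $\Sym(X^n)$ of the $m$-adic tree.
\end{enumerate}
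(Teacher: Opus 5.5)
Your proof is correct: reducing congruence density to $H\St_{\mathcal{G}}(n)=\mathcal{G}$ for every $n$ is a complete argument. The second step, B\'ezout in the finite quotient $\mathcal{G}/\St_{\mathcal{G}}(n)$ giving $\langle \bar g_i^{\,m_i}\rangle=\langle \bar g_i\rangle$, finishes it. The paper gives no proof of this statement; it imports the result from \cite[Prop.~4.1]{FG} (attributed to Leemann), and your finite-quotient argument is the standard proof of that kind of statement.
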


Recall that  $B$ is the second Basilica of the Grigorchuk--Erschler group. Hence, setting $d:=c_0c_1c_2c_3$,
 we have that, for an odd integer $q$,
\[
\widetilde{H}(q):=\langle  \alpha, (ad)^q, c_0,c_1,c_2,c_3\rangle\le B
\]
is dense with respect to the profinite topology, since $B$ has the congruence subgroup property by Proposition~\ref{prop:csp}.

Recall that the binary rooted tree can be identified with $\{0,1\}^*$. For later use, we give  here an alternative description of the action of the generators of $B$ on $\{0,1\}^*$. Let $s\in\{0,1\}^*$ and $s_1,s_2,s_3,s_4,s_5,s_6\in\{0,1\}$. 
\begin{enumerate}
    \item [$\bullet$] The action of $a$ is given by
    \[
    a\cdot s_1s=(s_1+1)s.
    \]
    \item [$\bullet$] The action of $\alpha$ is given by
    \[
    \alpha\cdot s_1s_2s=\begin{cases}
     s_1(s_2+1)s & \text{if }s_1=0,\\
   s_1s_2s & \text{if }s_1=1.
     \end{cases}
    \]
     \item [$\bullet$] The action of each of the elements $c_3, c_2,c_1,c_0$ is given recursively by 
\begin{align*}
    c_3\cdot s_1s_2s_3s_4s
      &=\begin{cases}
   s_1s_2s_3s_4(c_3(s)) & \text{if }s_1s_2s_3s_4=0000,\\
    s_1s_2(s_3+1)s_4s & \text{if }s_1s_2=10,\\
    s_1s_2s_3s_4s & \text{otherwise;}
     \end{cases}
    \end{align*}
       
    \begin{align*}
    c_2\cdot s_1s_2s_3s_4s=\begin{cases}
    s_1s_2s_3s_4(c_2(s)) &\text{if }s_1s_2s_3s_4=0000,\\
    s_1s_2s_3(s_4+1)s & \text{if }s_1s_2s_3=010,\\
    s_1s_2s_3s_4s & \text{otherwise;}
   \end{cases}
    \end{align*}
     
    \begin{align*}
    c_1\cdot s_1s_2s_3s_4s_5s=\begin{cases}
    s_1s_2s_3s_4(c_1(s_5s)) &\text{if }s_1s_2s_3s_4=0000,\\
    s_1s_2s_3s_4(s_5+1)s & \text{if }s_1s_2s_3s_4=0010,\\
    s_1s_2s_3s_4s_5s & \text{otherwise;} 
   \end{cases}
    \end{align*}
    
    \begin{align*}
    c_0\cdot s_1s_2s_3s_4s_5s_6s=\begin{cases}
    s_1s_2s_3s_4(c_0(s_5s_6s)) &\text{if }s_1s_2s_3s_4=0000,\\
    s_1s_2s_3s_4s_5(s_6+1)s &\text{if }s_1s_2s_3s_4s_5=00010,\\
    s_1s_2s_3s_4s_5s_6s &\text{otherwise.}
   \end{cases}
    \end{align*}
\end{enumerate}

\begin{rmk}\label{rmk:5.3}
Observe that $d=(d,\alpha)$. Also note that $d(0^k 11 s)=0^k 11 s$, for any $k \in \mathbb{N}_0$ and $s\in X^*$. For all other vertices not of the form $0^k 11 s$, for any $k \in \mathbb{N}_0$ and $s\in X^*$, we see that $d\cdot s$ is the string obtained by adding 1 to the element immediately following the first $10$ in the sequence.  Therefore, in $\partial T$ the fixed boundary points of~$d$ are precisely the set of vertices $\{000\cdots\}\cup\{ 0^k11s \mid k \in \mathbb{N}_0, s \in X^*\}$; here the \textit{boundary} $\partial T$ of $T$ corresponds naturally to infinite simple rooted paths.
\end{rmk}

\begin{rmk}\label{rmk:5.4}
For $x\in\langle c_0,c_1,c_2,c_3\rangle$ and $s\in\{0,1\}^*$, either $x\cdot s=s$ or $x\cdot s=d\cdot s$. It follows that for any $\xi\in\partial T$, the orbit of $\xi$ under the action of $B$ coincides with the orbit of $\xi$ under the action of $\langle a,\alpha, d\rangle\le B$.
\end{rmk}

Now we consider the orbital graph, denoted by~$\mathfrak{O}$, generated by the action of $\langle a, \alpha, d \rangle$ on the orbit of~$\overline{0}$, where $\overline{0}$ represents the boundary point~$000\cdots$. We define the main trunk~$\mathfrak{T} = \mathfrak{T}_0 \cup \mathfrak{T}_1$ of~$\mathfrak{O}$ as the union of the orbital graphs $\mathfrak{T}_0$ and $\mathfrak{T}_1$, which are obtained by the actions of $\langle a, d \rangle$ and $\langle \alpha, d \rangle$ on the orbit of $\overline{0}$, respectively.

\begin{lemma}
    \label{lem:vertices-in-picture}
   Any $s\in\mathfrak{T}_0$ is  a sequence  with  an even number of zeros at the start, and all 1's are separated by an odd number of zeros.    Also  $\mathfrak{T}_0$ is a half-line with a loop at one end.
\end{lemma}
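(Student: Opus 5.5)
The plan is to prove the two claims separately: first that the set described in the statement is invariant under $a$ and $d$, and then that the orbital graph of $\langle a,d\rangle$ through $\overline{0}$ is a half-line with a loop at $\overline{0}$. Throughout we work with boundary points that are eventually zero, since the orbit of $\overline{0}$ under a group generated by finitary-type actions stays inside this set. Let $S$ be the set of boundary points $0^{2m}10^{j_1}10^{j_2}1\cdots 10^\infty$ with $m\ge 0$ and every $j_i$ odd, and put $S^+=S\cup\{\overline{0}\}$. The aim of the first step is to show that $a$ and $d$ map $S^+$ into itself. Since $\overline{0}\in S^+$, this gives $\mathfrak{T}_0\subseteq S^+$, which is the first assertion of the lemma.

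For $a$ this is direct. If the sequence is $0^{2m}1\cdots$ with $m\ge1$, then applying $a$ gives $10^{2m-1}1\cdots$, which has no leading zeros and a new odd gap $2m-1$. Conversely, applying $a$ to $10^{j}1\cdots$ gives $0^{j+1}1\cdots$, which has an even number of leading zeros. The point $\overline{0}$ is sent to $10^\infty\in S$.

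For $d$ we use Remark~\ref{rmk:5.3}. No element of $S$ has the form $0^k11s$, because consecutive $1$'s would give a gap $0$, which is even. So on $S\setminus\{\overline{0}\}$, $d$ flips the letter immediately after the first occurrence of $10$, and this first $10$ sits at the first $1$. There are three cases. If the first gap is $j\ge3$, then $d$ produces $0^{2m}1010^{j-2}1\cdots$, with new gaps $1$ and $j-2$, both odd. If $j=1$, then $d$ deletes the second $1$ and merges the gaps $1$ and $j_2$ into $1+1+j_2$, which is again odd. If there is only one $1$, then $d$ creates a new $1$ at gap $1$. Finally, $d$ fixes $\overline{0}$, and this gives the loop.

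For the half-line structure, we use that $a$ and $d$ are involutions. Every vertex of $\mathfrak{T}_0$ therefore has exactly one $a$-edge and one $d$-edge, and each of these is either a genuine edge or a loop. The map $a$ has no fixed points on $\partial T$. By Remark~\ref{rmk:5.3} together with the observation above that $S$ contains no point of the form $0^k11s$, the only fixed point of $d$ in $S^+$ is $\overline{0}$. Hence $\mathfrak{T}_0$ is a connected graph in which every vertex has degree $2$ (counting loops), the edges along any path alternate between $a$ and $d$, and exactly one vertex carries a loop. Such a graph is a cycle, a bi-infinite line, a finite segment with loops at both ends, or a half-line with a loop at its end. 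A cycle and a line contain no loop at all. A finite segment would need a second $d$-fixed point or an $a$-fixed point in $S^+$ to carry its other loop, and neither exists. So $\mathfrak{T}_0$ must be the half-line $\overline{0},\ a\overline{0},\ da\overline{0},\ ada\overline{0},\dots$ with the loop at $\overline{0}$.

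The main place where care is needed is the case analysis for $d$. One has to check that the ``first $10$'' in Remark~\ref{rmk:5.3} really occurs at the first $1$ for every element of $S$, and that the merging case ($j=1$) and the single-$1$ case both preserve odd gaps. Everything else is formal.
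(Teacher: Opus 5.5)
Your proposal is correct and follows essentially the same route as the paper. You show that the described set is invariant under $a$ and $d$; the paper phrases this as $a$ and $d$ only ever flipping digits in odd positions along the orbit of $\overline{0}$, which is equivalent to your gap-parity case analysis. You then use that $a$ is fixed-point-free and that $d$ fixes only $\overline{0}$ to single out the half-line with a loop among the four possible connected degree-$2$ graphs.
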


\begin{proof}
    We observe that the action of $a$ on $s$ swaps the digit  in the first position, i.e. at the beginning of the sequence describing a vertex. Then, as noted in Remark~\ref{rmk:5.3}, the action of $d$ on $s$ swaps the digit  immediately after the first occurrence of  the subsequence $10$, provided that there is no subsequence $11$ preceding it. The first statement then follows, as the action of an element of the form $a^\epsilon(da)^nd^\delta$, for some $n\ge 0$ and $\epsilon,\delta\in\{0,1\}$, on $\overline{0}$ involves swapping digits in an odd position  of the sequence (counting from the left).

    Then as in \cite[Prop. 5.5]{FG}, we observe that the orbital graph is a connected graph where every vertex has degree 2. This gives four possibilities for the graph: a circle, a line, a segment with loops at both ends, or a half-line with a loop at one end. It follows from Remark \ref{rmk:5.3} and the above paragraph    that $d$ has exactly one fixed point in this orbital graph, i.e. at $\overline{0}$, and so $d$ gives a loop at $\overline{0}$. Indeed, $d$ has fixed points in other orbital subgraphs, such as the vertices $0\cdots 0 11 s$, but by the first part, there are no such vertices on $\mathfrak{T}_0$. Then as $a$ has no fixed points, altogether we get a half-line with a loop at one end;  cf. Figure \ref{fig:half-line T0}.   
\end{proof}

\begin{lemma}
     \label{lem:vertices-in-picture-1}
   Any $s\in\mathfrak{T}_1$ is  a sequence  with  an odd number of zeros at the start, and all 1's are separated by an odd number of zeros. 
   Also  $\mathfrak{T}_1$ is a half-line with a loop at one end.
\end{lemma}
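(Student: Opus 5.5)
We argue exactly as in the proof of Lemma~\ref{lem:vertices-in-picture}, with $\alpha$ now playing the role of $a$. By definition, $\mathfrak{T}_1$ consists of the points $\alpha^{\epsilon}(d\alpha)^n d^{\delta}\cdot\overline{0}$ with $n\ge 0$ and $\epsilon,\delta\in\{0,1\}$.

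The first step is to understand how the two generators act on these sequences. By the explicit description of $\alpha$, it changes the second digit exactly when the first digit is $0$. Every point we reach begins with $0$, since neither $\alpha$ nor $d$ ever changes the first digit: by Remark~\ref{rmk:5.3}, $d$ only alters a digit lying immediately after an occurrence of $10$. Hence $\alpha$ always toggles the digit in position $2$. By Remark~\ref{rmk:5.3}, $d$ toggles the digit immediately after the first occurrence of $10$, provided no $11$ precedes it.

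The second step is an induction on the length of the word, tracking the set of positions that carry a $1$. We will show that the $1$'s occupy even positions $2=p_1<p_2<\dots<p_r$ forming an initial block of the form $2,4,6,\dots$, with one twist: the last $1$ may have been removed. Concretely, the reachable configurations have $1$'s exactly at positions $2,4,\dots,2r$, or at $4,\dots,2r$ obtained after $\alpha$ clears position $2$.

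The inductive step checks each generator.
\begin{itemize}
\item $d$ acts on the first $10$ pattern, which sits at positions $2k,2k+1$ where $2k$ is the last $1$ of an initial consecutive-even run, or at the first $1$. It therefore toggles position $2k+2$, which preserves evenness of positions.
\item Adjacent $1$'s never arise, because positions differ by $2$. So the $11$ exception in Remark~\ref{rmk:5.3} never applies.
\end{itemize}
Consequently every $s\in\mathfrak{T}_1$ has all $1$'s in even positions. It starts with an odd number of zeros (one zero before position $2$, or three before position $4$, and so on), and consecutive $1$'s are separated by an odd number of zeros.

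The third step is the graph structure. Every vertex has degree $2$ in the orbital graph of the two involutions $\alpha,d$, so $\mathfrak{T}_1$ is a circle, a line, a segment with loops at both ends, or a half-line with one loop.

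We then count fixed points. The point $\overline{0}$ is fixed by $d$ but moved by $\alpha$, which sends it to $01\overline{0}$. By Remark~\ref{rmk:5.3}, the other fixed points of $d$ have the form $0^k11s$, which is excluded by the first part. The fixed points of $\alpha$ are exactly the sequences beginning with $1$, and these are also excluded.

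So there is exactly one loop, at $\overline{0}$, and the graph must be a half-line with a loop at one end; in particular it is infinite, since distinct positions keep getting toggled.

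The main obstacle is making the invariant in the second step precise, namely verifying that the first $10$ pattern always sits at an even position. We plan to handle this by describing the orbit explicitly as the binary reflected Gray-code-type sequence on the even positions, as in \cite[Prop.~5.5]{FG}.
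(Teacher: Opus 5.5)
Your route is essentially the paper's: adapt the $\mathfrak{T}_0$ argument to $\alpha,d$, then use the degree-$2$ classification and the fact that $\overline{0}$ is the only point of the orbit fixed by $\alpha$ or $d$. The classification and loop-count steps are fine.

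However, the ``concrete'' invariant in your second step is false. We have $(d\alpha)^3\cdot\overline{0}=010001\overline{0}$, whose $1$'s are at positions $2$ and $6$. Also $\alpha(d\alpha)^3\cdot\overline{0}=000001\overline{0}$ has a single $1$ at position $6$. Both vertices appear in Figure~\ref{fig:half-line T1}. So the reachable configurations are not initial runs $2,4,\dots,2r$ or $4,\dots,2r$.

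The first alternative in your first bullet is also wrong. If all $1$'s are in even positions, every $1$ is followed by a $0$, so the first $10$ begins at the \emph{first} $1$, not at the end of a run. For example, $d$ sends $010101\overline{0}$ to $010001\overline{0}$, toggling position $4$ rather than $8$.

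None of this is needed, and the ``main obstacle'' you defer to a Gray-code description is not an obstacle. Take as invariant simply that $s$ begins with $0$ and all $1$'s of $s$ lie in even positions. This holds for $\overline{0}$. Since the first digit is $0$, $\alpha$ toggles position $2$, which preserves the invariant. If $s\neq\overline{0}$, its first $1$ is at some even position $2k$ and position $2k+1$ is $0$, so $d$ toggles position $2k+2$; if $s=\overline{0}$, $d$ fixes it. So the invariant is preserved, and it is exactly the first claim of the lemma.

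The paper shortens even this. From $\alpha=(a,1)$ and $d=(d,\alpha)$ one gets $\alpha(0s)=0\,a(s)$ and $d(0s)=0\,d(s)$. Hence $s\mapsto 0s$ is an isomorphism of labelled graphs from $\mathfrak{T}_0$ onto $\mathfrak{T}_1$, with $a$ relabelled as $\alpha$. Both statements then follow immediately from Lemma~\ref{lem:vertices-in-picture}.
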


\begin{proof}
    This is analogous to the previous proof, since the action of $\alpha$ on $s$ swaps the digit in the second position and hence all vertices are shifted from $\mathfrak{T}_0$ by adding a zero at the front; cf. Figure \ref{fig:half-line T1}.   
\end{proof}

\begin{figure}
    \centering

\begin{tikzpicture}
\node at (12,0) {$\ldots$};
\draw (0,0) -- (11.6,0);
\node at (0,0) [circle,fill,inner sep=1.5pt]{};
\node [below] at (0,-0.05) {$\overline{0}$};
	
\node[circle,draw] (c) at (-0.2,0){};
\node [left] at (-0.35,0) {$d$};
\node [above] at (0.8,0) {$a$};
\node at (1.6,0) [circle,fill,inner sep=1.5pt]{};
\node [below] at (1.6,0) {$1\overline{0}$};
\node at (3.2,0) [circle,fill,inner sep=1.5pt]{};
\node [below] at (3.2,0) {$101\overline{0}$};
\node [above] at (2.4,0) {$d$};

\node at (4.8,0) [circle,fill,inner sep=1.5pt]{};
\node [below] at (4.8,0) {$001\overline{0}$};
\node [above] at (4,0) {$a$};

\node at (6.4,0) [circle,fill,inner sep=1.5pt]{};
\node [below] at (6.4,0) {$00101\overline{0}$};
\node [above] at (5.6,0) {$d$};

\node at (8,0) [circle,fill,inner sep=1.5pt]{};
\node [below] at (8,0) {$10101\overline{0}$};
\node [above] at (7.2,0) {$a$};

\node at (9.6,0) [circle,fill,inner sep=1.5pt]{};
\node [below] at (9.6,0) {$10001\overline{0}$};
\node [above] at (8.8,0) {$d$};

\node at (11.2,0) [circle,fill,inner sep=1.5pt]{};
\node [below] at (11.2,0) {$00001\overline{0}$};
\node [above] at (10.4,0) {$a$};
\end{tikzpicture}
\caption{The half-line $\mathfrak{T}_0$.}
    \label{fig:half-line T0}
\end{figure}

\begin{figure}
    \centering

\begin{tikzpicture}
\node at (12,0) {$\ldots$};
\draw (0,0) -- (11.6,0);
\node at (0,0) [circle,fill,inner sep=1.5pt]{};
\node [below] at (0,-0.05) {$\overline{0}$};
	
\node[circle,draw] (c) at (-0.2,0){};
\node [left] at (-0.35,0) {$d$};
\node [above] at (0.8,0) {$\alpha$};
\node at (1.6,0) [circle,fill,inner sep=1.5pt]{};
\node [below] at (1.6,0) {$01\overline{0}$};
\node at (3.2,0) [circle,fill,inner sep=1.5pt]{};
\node [below] at (3.2,0) {$0101\overline{0}$};
\node [above] at (2.4,0) {$d$};

\node at (4.8,0) [circle,fill,inner sep=1.5pt]{};
\node [below] at (4.8,0) {$0001\overline{0}$};
\node [above] at (4.0,0) {$\alpha$};

\node at (6.4,0) [circle,fill,inner sep=1.5pt]{};
\node [below] at (6.4,0) {$000101\overline{0}$};
\node [above] at (5.6,0) {$d$};

\node at (8,0) [circle,fill,inner sep=1.5pt]{};
\node [below] at (8,0) {$010101\overline{0}$};
\node [above] at (7.2,0) {$\alpha$};

\node at (9.6,0) [circle,fill,inner sep=1.5pt]{};
\node [below] at (9.6,0) {$010001\overline{0}$};
\node [above] at (8.8,0) {$d$};

\node at (11.2,0) [circle,fill,inner sep=1.5pt]{};
\node [below] at (11.2,0) {$000001\overline{0}$};
\node [above] at (10.4,0) {$\alpha$};
\end{tikzpicture}
\caption{The half-line $\mathfrak{T}_1$.}
    \label{fig:half-line T1}
\end{figure}

For odd $n\ge 3$, write $\eta(n)=\sum_{i=0}^{\frac{n-3}{2}}2^i=1+2+2^2+\cdots+2^{\frac{n-3}{2}}$, and for convenience let $\eta(1)=0$.

\begin{lemma}\label{lem:ada}
    For every $s\in X^*$, 
    \begin{enumerate}
        \item [(i)] $a(da)^{\eta(n)}(0^ns)=0^{n-1}1s$  for odd $n\ge 1$;

         \item [(ii)] $a(da)^{\eta(n)}(0^{n-1}1s)=0^ns$ for odd  $n\ge 1$.
    \end{enumerate}
\end{lemma}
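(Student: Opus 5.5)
Since $a$ and $d=c_0c_1c_2c_3$ are involutions (by Lemma~\ref{lem:relations}(i),(ii) the $c_i$ are commuting involutions), the element $a(da)^m$ is an involution for every $m\ge 0$: indeed $(a(da)^m)^{-1}=(ad)^m a = a(da)^m$. Applying it to both sides of (i) then gives (ii). So the plan is to prove only (i), by induction on odd $n$, stepping from $n$ to $n+2$. This uses $\eta(1)=0$, $\eta(3)=1$ and the recursion $\eta(n+2)=2\eta(n)+1$ for odd $n\ge 1$.

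For the base case $n=1$, note that $\eta(1)=0$, so the element is just $a$, and $a(0s)=1s$ as required.

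For the inductive step, compute the first-level decompositions using $a=\tau$, $\alpha=(a,1)$ and $d=(d,\alpha)$ (Remark~\ref{rmk:5.3}). Conjugating by $a$ swaps coordinates, which gives $ada=(\alpha,d)$ and $da\,da=(d,\alpha)(\alpha,d)=(d\alpha,\alpha d)$. Hence, for $k\ge 0$,
\[
a(da)^{2k+1}=(ada)(da\,da)^{k}=\bigl(\alpha(d\alpha)^k,\; d(\alpha d)^k\bigr),
\]
which in particular lies in $\St_B(1)$. Going one level deeper, $d\alpha=(d,\alpha)(a,1)=(da,\alpha)$, so
\[
\alpha(d\alpha)^k=(a,1)(da,\alpha)^k=\bigl(a(da)^k,\;\alpha^k\bigr).
\]
Now take $k=\eta(n)$, so that $2k+1=\eta(n+2)$. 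The vertex $0^{n+2}s=00\,0^ns$ starts with $00$, so $a(da)^{\eta(n+2)}$ acts on it through the section $a(da)^{\eta(n)}$ at the vertex $00$. By the induction hypothesis this sends $0^ns$ to $0^{n-1}1s$. We conclude $a(da)^{\eta(n+2)}(0^{n+2}s)=0^{n+1}1s$, which completes the induction.

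The main point requiring care is the bookkeeping of the composition convention. Products such as $(d,\alpha)(\alpha,d)$ must be multiplied coordinatewise in the same order as the paper's action convention, and one must check that the section of $a(da)^{2k+1}$ at $00$ really is $a(da)^k$ rather than its reverse word $(ad)^ka$. This ambiguity is actually harmless: $(ad)^ka=a(da)^k$, since both are the same alternating word in $a$ and $d$. So the only genuine check is the two-level decomposition above. As a sanity check, one can verify $n=3$ directly against Figure~\ref{fig:half-line T0}: $ada(000s)$ should equal $001s$. Here $a$ sends $000s$ to $100s$, then $d$ sends it to $101s$, then $a$ sends it to $001s$, which is consistent.
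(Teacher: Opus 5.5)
Your proof is correct, but it takes a different route from the paper's.

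The paper works directly with strings. It factors $a(da)^{\eta(n)}=g\,d\,g$ with $g=a(da)^{\eta(n-2)}$. It applies part (i) of the induction hypothesis to $0^{n-2}(00s)$ to reach $0^{n-3}100s$. Then $d$ flips the digit after the first $10$ (Remark~\ref{rmk:5.3}), giving $0^{n-3}101s$. Finally it applies part (ii) to $0^{n-3}1(01s)$. So in the paper, (i) and (ii) have to be proved together. The payoff is that the argument traces the actual path in the orbital graph through its midpoint, which is the picture used later in Lemma~\ref{lem:finite-branches}.

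You instead use the wreath recursion. Your computation
\[
a(da)^{2k+1}=\bigl((a(da)^k,\alpha^k),\,d(\alpha d)^k\bigr)
\]
shows that $a(da)^{\eta(n+2)}$ fixes the vertex $00$ and has section $a(da)^{\eta(n)}$ there. This makes the inductive step a single line, and only (i) needs induction. Part (ii) then follows at once, because $a(da)^m$ is a palindromic word in involutions and hence is its own inverse.

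Your route also gives something extra. The intermediate identity $\alpha(d\alpha)^k=(a(da)^k,\alpha^k)$ is exactly the rigorous form of the ``remove the first digit'' reduction the paper uses to get Lemma~\ref{lem:alpha}. Taking $k=\eta(n-1)$ gives $\alpha(d\alpha)^{\eta(n-1)}(0\,0^{n-1}s)=0\,a(da)^{\eta(n-1)}(0^{n-1}s)=0^{n-1}1s$ immediately.
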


\begin{proof}
We proceed by induction on~$n$, where the result is straightforward for $n=1$. We check for $n=3$. Noting that $\eta(3) = 1$, we have
\begin{align*}
    a(da)^{\eta(3)}(0^3s) = ada(0^3s) = ad(100s) = a(101s) = 001s = 0^21s,\\
    a(da)^{\eta(3)}(0^21s) = ada(0^21s) = ad(101s) = a(100s) = 000s = 0^3s.
\end{align*}
Note that $a(da)^{\eta(n)}=\big(a(da)^{\eta(n-2)}\big) d\big(a(da)^{\eta(n-2)}\big)$. By induction we have
\begin{align*}
    a(da)^{\eta(n)}(0^ns) & =\big(a(da)^{\eta(n-2)}\big) d\big(a(da)^{\eta(n-2)}\big)(0^{n-2}00s) = \big(a(da)^{\eta(n-2)}\big) d (0^{n-3}100s) \\
    & = \big(a(da)^{\eta(n-2)}\big) (0^{n-3}101s) = 0^{n-2}01s = 0^{n-1}1s,\\
    a(da)^{\eta(n)}(0^{n-1}1s) & =\big(a(da)^{\eta(n-2)}\big) d\big(a(da)^{\eta(n-2)}\big)(0^{n-2}01s) = \big(a(da)^{\eta(n-2)}\big) d (0^{n-3}101s) \\
    & = \big(a(da)^{\eta(n-2)}\big) (0^{n-3}100s) = 0^{n-2}00s = 0^{n}s.\qedhere
\end{align*}
\end{proof}

\begin{lemma}\label{lem:alpha}
    For every $s\in X^*$, 
    \begin{enumerate}
        \item [(i)]  $\alpha(d\alpha)^{\eta(n-1)}(0^{n}s)=0^{n-1}1s$ for even  $n\ge 2$;

         \item [(ii)] $\alpha(d\alpha)^{\eta(n-1)}(0^{n-1}1s)=0^{n}s$ for even  $n\ge 2$.
    \end{enumerate}
\end{lemma}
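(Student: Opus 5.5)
The plan is to mirror the proof of Lemma~\ref{lem:ada}: induct on the even integer $n$, using a recursive factorisation of the word $\alpha(d\alpha)^{\eta(n-1)}$ together with the explicit actions of $\alpha$ and $d$ from Remark~\ref{rmk:5.3}. Recall that $\alpha$ flips the second letter of words beginning with $0$ and fixes words beginning with $1$. The element $d$ flips the letter immediately after the first occurrence of $10$, provided no $11$ occurs before it. The structural identity I will use is
\[
\alpha(d\alpha)^{\eta(n-1)}=\big(\alpha(d\alpha)^{\eta(n-3)}\big)\, d\, \big(\alpha(d\alpha)^{\eta(n-3)}\big),
\]
valid for even $n\ge 4$. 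It follows because $\eta(n-1)=2\eta(n-3)+1$ and all the letters are involutions, exactly as in the proof of Lemma~\ref{lem:ada}.

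The base case is $n=2$, where $\eta(1)=0$. Here the element is just $\alpha$, and $\alpha(00s)=01s$ and $\alpha(01s)=00s$ directly. I will also check $n=4$ by hand, where $\eta(3)=1$ and the element is $\alpha d\alpha$:
\[
\alpha d\alpha(0000s)=\alpha d(0100s)=\alpha(0101s)=0001s ,
\]
and in reverse $\alpha d\alpha(0001s)=\alpha d(0101s)=\alpha(0100s)=0000s$. Both computations use only that the first $10$ in $0100s$ or $0101s$ sits in positions $2,3$, with no preceding $11$.

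For the inductive step with $n\ge 4$ even, write $0^n s=0^{n-2}\,00s$ and apply the induction hypothesis for $n-2$ with tail $0s$. This gives $0^{n-3}1\,0s$. Then $d$ acts on $0^{n-3}10s$: the first $10$ is at positions $n-2,n-1$, and there is no earlier $11$ since only zeros precede it. So $d$ flips position $n$, producing $0^{n-3}101s$. Applying the induction hypothesis (ii) for $n-2$ with tail $1s$ returns $0^{n-2}01s=0^{n-1}1s$, which is (i). Part (ii) is the same chain run on $0^{n-1}1s$:
\[
0^{n-1}1s=0^{n-2}01s\mapsto 0^{n-3}101s\mapsto 0^{n-3}100s\mapsto 0^{n}s .
\]

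The only point needing care, and the one I expect to be the main obstacle, is justifying the action of $d$ on words of the form $0^{k}10x s$ from Remark~\ref{rmk:5.3}. One needs that $d$ indeed changes exactly the letter after the first $10$, including the case $k$ odd relevant here. Since Remark~\ref{rmk:5.3} describes $d$ on all vertices not of the form $0^k11s$, and our words contain no $11$ before the relevant $10$, this is immediate. Beyond that, the induction is routine bookkeeping with the parity of $n$ ensuring that $n-1$ is odd, so that $\eta(n-1)$ is defined.
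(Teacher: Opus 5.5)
Your proof is correct. It takes a different route from the paper: you redo the induction of Lemma~\ref{lem:ada} from scratch, whereas the paper reduces directly to that lemma.

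\textbf{The paper's route.} Since $\alpha=(a,1)$ and $d=(d,\alpha)$, the subtree $T_0$ is invariant under $\langle \alpha,d\rangle$. On $T_0$, $\alpha$ acts as $a$ and $d$ acts as $d$. Hence $\alpha(d\alpha)^{m}(0w)=0\,a(da)^{m}(w)$ for every word $w$. Take $w=0^{n-1}s$ or $w=0^{n-2}1s$, with $n-1$ odd. Then Lemma~\ref{lem:ada} gives (i) and (ii) at once, and this also explains why the exponent is $\eta(n-1)$.

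\textbf{Comparison.} Your argument is self-contained: it uses only the explicit actions of $\alpha$ and $d$ and the splitting $\alpha(d\alpha)^{2m+1}=\alpha(d\alpha)^m\,d\,\alpha(d\alpha)^m$, which in fact holds without using that the letters are involutions. The cost is that it duplicates work the paper avoids by passing to the section at $0$.

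\textbf{A bookkeeping slip to fix.} In the inductive step for (i), write $0^n s=0^{n-2}(00s)$. Applying the hypothesis for $n-2$ then uses tail $00s$ and yields $0^{n-3}100s$, not $0^{n-3}10s$. Likewise, the final step uses tail $01s$, not $1s$. Your subsequent words $0^{n-3}101s$ and $0^{n-2}01s$, and the whole chain you give for (ii), are already correct. So this is only a miscount, not a gap.
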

\begin{proof}
    This follows as in the previous proof, since upon removing the first digit, we reduce to the setting in the previous lemma with $a$'s replacing the $\alpha$'s.   
\end{proof}

We now show that all paths in $\mathfrak{O}$ branching off from vertices $s\ne \overline{0}$ on either $\mathfrak{T}_0$ or $\mathfrak{T}_1$ respectively, are finite.
\begin{lemma}\label{lem:finite-branches}
  \textup{(i)}  Let $s\ne \overline{0}$ be a vertex on $\mathfrak{T}_0$. Then the subgraph obtained under the action of  $\langle a, \alpha, d\rangle$ restricted to $(\mathfrak{O}\backslash \mathfrak{T}_0) \cup \{s\}$,   beginning with the path 
    
  \qquad\qquad   \begin{tikzpicture}
\draw (0,0) -- (1,0);
\node at (0,0) [circle,fill,inner sep=1.5pt]{};
\node [below] at (0,-0.05) {$s$};
\node [above] at (0.5,0) {$\alpha$};
\end{tikzpicture}

\noindent is finite. Further, all the vertices on this subgraph, apart from $s$, do not lie in $\mathfrak{T}_0$ or $\mathfrak{T}_1$.

\textup{(ii)}  Let $s\ne \overline{0}$ be a vertex on $\mathfrak{T}_1$. Then the subgraph obtained under the action of  $\langle a, \alpha, d\rangle$ restricted to $(\mathfrak{O}\backslash \mathfrak{T}_1) \cup \{s\}$, beginning with the path 
    
  \qquad\qquad   \begin{tikzpicture}
\draw (0,0) -- (1,0);
\node at (0,0) [circle,fill,inner sep=1.5pt]{};
\node [below] at (0,-0.05) {$s$};
\node [above] at (0.5,0) {$a$};
\end{tikzpicture}

\noindent is finite. Further, all the vertices on this subgraph, apart from $s$, do not lie in $\mathfrak{T}_0$ or $\mathfrak{T}_1$.
\end{lemma}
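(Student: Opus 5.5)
We describe the branches explicitly using the digit description of the vertices in Lemma~\ref{lem:vertices-in-picture}. Take $s\neq\overline{0}$ on $\mathfrak{T}_0$. Then $s=0^{2k}1u$ with $k\ge0$, and every $1$ in $u$ is separated from the next by an odd number of zeros. The plan is to read off, step by step, what happens to the second digit and to the first occurrence of $10$ as we leave $s$ along the $\alpha$-edge. After that, we show that the only generators that move us further away always act inside a bounded prefix of the word, namely the part up to the first $1$ together with a few following digits.

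\textbf{Key step: a digit invariant preserved off the trunk.} Applying $\alpha$ to $s$ flips the second digit, which breaks the parity pattern. We claim the resulting vertex $t$, and every vertex reachable from $t$ without re-entering $s$, has the following form. Write the vertex as $pw$, where $w$ is the tail of $s$ beyond a fixed position $N$ with $N\le 2k+4$, say. We claim $w$ is never changed, and the prefix $p$ ranges over a finite set. To justify this we use the three moves:
\begin{itemize}
\item $a$ changes only digit $1$;
\item $\alpha$ changes only digit $2$, and only when digit $1$ is $0$;
\item by Remark~\ref{rmk:5.3}, $d$ changes the digit after the first $10$, and fixes the vertex if a $11$ occurs before it.
\end{itemize}
Starting at $t$, the first $1$ lies within the first two digits or at position $2k+1$. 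So the first $10$, if any, occurs near the front. We then check that every $d$-move changes a digit at position at most about $2k+3$. Hence all modifications are confined to a prefix of bounded length, and the connected component is finite, with at most $2^{N}$ vertices.

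\textbf{Showing the branch does not meet either trunk.} To see that no vertex other than $s$ lies in $\mathfrak{T}_0$ or $\mathfrak{T}_1$, we use Lemmas~\ref{lem:vertices-in-picture} and~\ref{lem:vertices-in-picture-1}. Trunk vertices have all $1$'s separated by odd runs of zeros, and the parity of the initial run of zeros determines which trunk they lie on. The vertices in the branch share the tail $w$ with $s$, so a return to a trunk would force the prefix to be one of the two parity-correct prefixes compatible with $w$. We then argue that the only such prefixes reachable are $s$ itself and possibly its two $\mathfrak{T}_0$-neighbours. Those neighbours are reached only via $a$ or $d$ from $s$, and these edges lie on $\mathfrak{T}_0$, so they are excluded from the subgraph. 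Degree considerations make this precise: the orbital graph of $\langle a,\alpha,d\rangle$ is $3$-regular counting loops, and $s$ already uses its $a$- and $d$-edges on $\mathfrak{T}_0$.

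\textbf{Case (ii) and the main obstacle.} Part (ii) follows by the same argument with the roles of $a$ and $\alpha$ swapped, after stripping the leading $0$ as in Lemma~\ref{lem:alpha}. The main obstacle is the case analysis for the $d$-moves. One must carefully track when a newly created $11$ (for example, digits $1$ and $2$ both equal to $1$) makes $d$ act trivially, producing loops. One must also track when the first $10$ shifts position, in order to ensure the confined prefix really has bounded length. I would first treat small $k$ ($k=0,1$) by explicit drawing to identify the pattern, and then induct on $k$ using Lemma~\ref{lem:ada}, which moves between $0^n$ and $0^{n-1}1$ prefixes.
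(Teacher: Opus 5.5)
There is a genuine gap. The heart of the lemma is your sentence ``we then check that every $d$-move changes a digit at position at most about $2k+3$'', and it is never checked. The only evidence you give concerns $t=\alpha(s)$. But $d$ acts just after the \emph{first} $1$, so bounding $d$-moves needs a property that persists along the whole branch. The natural candidate is that the $1$ in position $2k+1$ is never erased. It fails at $v^*=0^{2k-2}101u$, where $d$ erases exactly that $1$, and $v^*$ lies on $\mathfrak{T}_0$. For $s=00001\overline{0}$ one has $v^*=00101\overline{0}$, and $00101\overline{0},10101\overline{0},10001\overline{0},00001\overline{0}$ are consecutive vertices of $\mathfrak{T}_0$, all sharing the tail of $s$.

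So any bounded-prefix argument must control how the branch meets the trunk, and your second step does not. The count of ``two parity-correct prefixes compatible with $w$'' is false; there are $2^k$ trunk words with that tail, and which of them are reachable is exactly the point at issue. The degree count at $s$ only prevents re-entry through the $a$- and $d$-edges of $s$. It does not prevent reaching another trunk vertex through that vertex's own edges, which would close a cycle in $\mathfrak{O}$, the very thing the lemma excludes. Also, (ii) is not a shift of (i): its branch starts with an $a$-edge, which leaves the words beginning with $0$, and the branch shapes differ (compare Figures~\ref{fig:3-0} and~\ref{fig:4-0}).

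What is missing is an invariant that the generators genuinely preserve off the trunk. For (i), write $s=0^{2k}1u$ with $k\ge 1$ (if $k=0$ the $\alpha$-edge is a loop). Let $W$ be the set of words $p1u$ with $p\in X^{2k}$, $p\neq 0^{2k}$, whose last $1$ inside $p$ sits at an even position $2j$. Then:
\begin{itemize}
\item $\alpha(s)\in W$.
\item $a$ and $\alpha$ only touch positions $1$ and $2$, so they preserve $W$, except that $\alpha$ sends $\alpha(s)=010^{2k-2}1u$ back to $s$.
\item Suppose $d$ moves $x\in W$ and the first $1$ of $x$ is at position $i$. Then $d$ flips position $i+2\le 2k$, because $x_{i+1}=0$ while $x_{2j}=x_{2k+1}=1$. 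Either $i+2<2j$; or $i=2j<2k$ and a new last $1$ appears at $2j+2$; or $i+2=2j$ and the new last $1$ is at $2j-2$.
\end{itemize}
Hence every vertex of the branch other than $s$ lies in $W$, so the branch has at most $2^{2k}$ vertices. Moreover no word of $W$ is on $\mathfrak{T}_0\cup\mathfrak{T}_1$: its last $1$ before position $2k+1$ and the $1$ in position $2k+1$ are separated by an even number of zeros, contrary to Lemmas~\ref{lem:vertices-in-picture} and~\ref{lem:vertices-in-picture-1}. For (ii), use $s=0^{2k-1}1u$, its $1$ in position $2k$, and `odd' in place of `even'.

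This is a shorter route than the paper's, which argues by simultaneous induction on the number of leading zeros through the $\mathfrak{A}$- and $\mathfrak{U}$-type subgraphs. There, Lemmas~\ref{lem:ada} and~\ref{lem:alpha} show that the main $\langle a,d\rangle$- or $\langle\alpha,d\rangle$-path ends at a $d$-fixed word $0^m11s$ and hands over to a smaller type, and all side branches are of smaller type. Your closing remark points in that direction but does not carry it out.
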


\begin{proof}
We prove both parts simultaneously. Note first that $\alpha(1s)=1s$. Hence we only need to consider vertices in $\mathfrak{T}_0\cup\mathfrak{T}_1$ starting with 0.  We proceed by induction on the number of starting zeros of the vertices. 

We begin with the two base cases of one and two starting zeros respectively. For each $s\in X^*$, the action of $\langle a, \alpha, d\rangle$ restricted to $(\mathfrak{O}\backslash \mathfrak{T}_1) \cup \{01s\}$, beginning with the path from $01s$ to $a(01s)$ yields

    \begin{tikzpicture}
\node [left] at (-0.5,0.8) {$\mathfrak{A}_{01}$};
\draw (0,0) -- (2,0);
\node at (0,0) [circle,fill,inner sep=1.5pt]{};
\node [below] at (0,-0.05) {$01s$};
\node [above] at (1,0) {$a$};
\node at (2,0) [circle,fill,inner sep=1.5pt]{};
\node [below] at (2,0) {$11s$};
\node[circle,draw] (c) at (2,0.18){};
\node [above] at (2,0.35) {$d,\alpha$};
\end{tikzpicture}

\noindent where we denote this type of finite subgraph by $\mathfrak{A}_{01}$. Likewise, we see that  for every $s\in X^*$, the action of $\langle a, \alpha, d\rangle$ restricted to $(\mathfrak{O}\backslash \mathfrak{T}_0) \cup \{001s\}$, beginning with the path from $001s$ to $\alpha(001s)$ gives 

\nopagebreak
    \begin{tikzpicture}
\node [left] at (-0.5,0.8) {$\mathfrak{U}_{001}$};
\draw (0,0) -- (3,0);
\node at (0,0) [circle,fill,inner sep=1.5pt]{};
\node [below] at (0,-0.05) {$001s$};
\node [above] at (1,0) {$\alpha$};
\node at (2,0) [circle,fill,inner sep=1.5pt]{};
\node [below] at (2,0) {$011s$};
\node[circle,draw] (c) at (2,0.18){};
\node [above] at (2,0.35) {$d$};
\node [right] at (3,0) {$\mathfrak{A}_{01}$};
\end{tikzpicture}

\noindent where we denote this type of finite subgraph by $\mathfrak{U}_{001}$. It is clear that all vertices, apart from $s$, are not in $\mathfrak{T}_0$ or  $\mathfrak{T}_1$. Similarly for the vertices on $\mathfrak{A}_{01}$. Note that more generally, even if $s\notin \mathfrak{T}_0\cup \mathfrak{T}_1$, we still have that the corresponding vertices of $\mathfrak{A}_{01}$ and $\mathfrak{U}_{001}$ are not in $\mathfrak{T}_0\cup \mathfrak{T}_1$.

More generally, for some $k\in\mathbb{N}$ and for each $s\in X^*$, we denote by $\mathfrak{A}_{0^{2k-1}1}$  the type of subgraph obtained under the action of $\langle a, \alpha, d\rangle$, beginning with the path from $0^{2k-1}1s$ to  $a(0^{2k-1}1s)$. Similarly, for some $k\in\mathbb{N}$ and for each $s\in X^*$, we denote by $\mathfrak{U}_{0^{2k}1}$  the type of subgraph obtained under the action of $\langle a, \alpha, d\rangle$, beginning with the path from $0^{2k}1s$ to  $\alpha(0^{2k}1s)$.

Now by induction, we may suppose that for some $n\in\mathbb{N}$,   the subgraphs $\mathfrak{U}_{0^{2k}1}$ and $\mathfrak{A}_{0^{2k-1}1}$ are finite for all $k\in\{1,\ldots,n\}$. We now show that the subgraphs $\mathfrak{U}_{0^{2n+2}1}$ and $\mathfrak{A}_{0^{2n+1}1}$ are finite.  

We first consider $\mathfrak{A}_{0^{2n+1}1}$. From Lemma \ref{lem:ada}, we have $a(da)^{\eta(2n+1)}(0^{2n+1}1s)=0^{2n}11s$. So the ``main'' path of the subgraph  $\mathfrak{A}_{0^{2n+1}1}$ given by the action of $\langle a,d\rangle$ terminates in the finite subgraph $\mathfrak{U}_{0^{2n}1}$; see for instance Figures \ref{fig:3-0} and \ref{fig:5-0}. Indeed, recall that the action of $d$ is trivial on the vertex  $0^{2n}11s$. Therefore, it remains to consider the action of $\langle a,\alpha, d\rangle$ on the interior vertices of the ``main'' path, starting with the action of $\alpha$. Note that by first acting with $\alpha$, we depart from the subgraph obtained under the action of just $\langle a,d\rangle$. Recall that we only need to consider vertices starting with a zero, as otherwise the subgraph obtained is just a loop given by $\alpha$; indeed, we see for instance in Figures \ref{fig:3-0} and \ref{fig:5-0} that the action of $\langle a, d \rangle$ on the  ``main'' path, starting from $0^{2n+1}1s$, involves swapping digits in a certain odd position of the sequence. Hence the starting number of zeros in these remaining vertices is even  and as this number is strictly less than $2n+1$, the result follows by induction, more specifically, the side subgraphs will be one of the $\mathfrak{U}_{0^{2k}1}$ for $k\in\{1,\ldots,n\}$.

We now consider $\mathfrak{U}_{0^{2n+2}1}$. By Lemma~\ref{lem:alpha}, we have that $\alpha(d\alpha)^{\eta(2n+1)}(0^{2n+2}1s)=0^{2n+1}11s$. Since $d$ acts trivially on $0^{2n+1}11s$, the action of $\langle \alpha, d\rangle$ on $0^{2n+2}1s$ leads to a path terminating with the subgraph $\mathfrak{A}_{0^{2n+1}1}$; see for example Figure~\ref{fig:4-0}. Further note that the action of $\langle \alpha, d\rangle$ on the ``main'' path, starting from $0^{2n+2}1s$, can only swap a digit at an even position of the sequence, hence leaves an odd number of zeros before the first occurrence of 1 in each interior vertex between $0^{2n+2}1s$ and $0^{2n+1}11s$. We depart from the ``main''  path obtained by the action of $\langle \alpha, d\rangle$ only when we act on the interior vertices by $a$. This leads to subgraphs of type $\mathfrak{A}_{0^{2k+1}1}$, for $k \in \{1,\dots,n\}$, as seen for instance in Figure~\ref{fig:4-0}. Hence we are done as before by induction.

Finally, for $s\in\mathfrak{T}_0$, we observe that, apart from $s$ itself, the vertices in the orbit under the action of $\langle \alpha,d\rangle$ do not lie in $\mathfrak{T}_0\cup\mathfrak{T}_1$; cf. Lemmas~\ref{lem:vertices-in-picture} and \ref{lem:vertices-in-picture-1}. Likewise for the orbit of $s\in \mathfrak{T}_1$ under the action of $\langle a,d\rangle$.
    \end{proof}

    \begin{figure}
    \centering
    
    \begin{tikzpicture}
\draw (0,0) -- (6.5,0);
\node at (0,0) [circle,fill,inner sep=1.5pt]{};
\node [below] at (0,-0.05) {$0001s$};
\node [above] at (1,0) {$a$};
\node at (2,0) [circle,fill,inner sep=1.5pt]{};
\node [below] at (2,0) {$1001s$};
\node[circle,draw] (c) at (2,0.18){};
\node [above] at (2,0.35) {$\alpha$};
\node at (4,0) [circle,fill,inner sep=1.5pt]{};
\node [below] at (4,0) {$1011s$};
\node[circle,draw] (c) at (4,0.18){};
\node [above] at (4,0.35) {$\alpha$};
\node [above] at (3,0) {$d$};
\node [above] at (5,0) {$a$};
\node at (6,0) [circle,fill,inner sep=1.5pt]{};
\node [below] at (6,0) {$0011s$};
\node[circle,draw] (c) at (6,0.18){};
\node [above] at (6,0.35) {$d$};
\node [right] at (6.5,0) {$\mathfrak{U}_{001}$};
\end{tikzpicture}
 \caption{The finite subgraph of type $\mathfrak{A}_{0001}$}
 \label{fig:3-0}
\end{figure}

 \begin{figure}
     \centering
 \!\!\!\!\!\!\!\!\begin{tikzpicture}
\draw (0,0) -- (6.5,0);
\node at (0,0) [circle,fill,inner sep=1.5pt]{};
\node [below] at (0,-0.05) {$00001s$};
\node [above] at (1,0) {$\alpha$};
\node at (2,0) [circle,fill,inner sep=1.5pt]{};
\node [below] at (2,0) {$01001s$};
\draw (2,0) -- (2,0.25);
\node [above] at (2,0.25) {$\mathfrak{A}_{01}$};
\node at (4,0) [circle,fill,inner sep=1.5pt]{};
\node [below] at (4,0) {$01011s$};

\node [above] at (3,0) {$d$};
\draw (4,0) -- (4,0.25);
\node [above] at (4,0.25) {$\mathfrak{A}_{01}$};
\node [above] at (5,0) {$\alpha$};
\node at (6,0) [circle,fill,inner sep=1.5pt]{};
\node [below] at (6,0) {$00011s$};
\node[circle,draw] (c) at (6,0.18){};
\node [above] at (6,0.35) {$d$};
\node [right] at (6.5,0) {$\mathfrak{A}_{0001}$};
\end{tikzpicture}
 \caption{The finite subgraph of type $\mathfrak{U}_{00001}$}
 \label{fig:4-0}
\end{figure}   

\bigskip

\begin{figure}
     \centering
\!\!\!\!\!\!\!\!\!\!\!\!\!\!\!\!\!\!\!\!\!\!\!\!\begin{tikzpicture}
\draw (1,0) -- (13.5,0);
\node at (1,0) [circle,fill,inner sep=1.5pt]{};
\node [below] at (1,-0.05) {$0^51s$};
\node [above] at (1.5,0) {$a$};
\node at (2.3,0) [circle,fill,inner sep=1.5pt]{};
\node [below] at (2.3,0) {$10^31s$};
\node[circle,draw] (c) at (2.3,0.18){};
\node [above] at (2.3,0.35) {$\alpha$};
\node at (3.8,0) [circle,fill,inner sep=1.5pt]{};
\node [below] at (3.8,0) {$10101s$};
\node [above] at (3,0) {$d$};
\node[circle,draw] (c) at (3.8,0.18){};
\node [above] at (3.8,0.35) {$\alpha$};
\node [above] at (4.8,0) {$a$};
\node at (5.6,0) [circle,fill,inner sep=1.5pt]{};
\node [below] at (5.6,0) {$0^210^21s$};
\draw (5.6,0) -- (5.6,0.25);
\node [above] at (5.6,0.25) {$\mathfrak{U}_{001}$};
\node [above] at (6.5,0) {$d$};
\node at (7.5,0) [circle,fill,inner sep=1.5pt]{};
\node [below] at (7.5,0) {$0^21011s$};
\draw (7.5,0) -- (7.5,0.25);
\node [above] at (7.5,0.25) {$\mathfrak{U}_{001}$};
\node [above] at (8.4,0) {$a$};
\node at (9,0) [circle,fill,inner sep=1.5pt]{};
\node [below] at (9,0) {$101011s$};
\node[circle,draw] (c) at (9,0.18){};
\node [above] at (9,0.35) {$\alpha$};
\node [above] at (10,0) {$d$};
\node at (11,0) [circle,fill,inner sep=1.5pt]{};
\node [below] at (11,0) {$10^311s$};
\node[circle,draw] (c) at (11,0.18){};
\node [above] at (11,0.35) {$\alpha$};
\node [above] at (12,0) {$a$};
\node at (13,0) [circle,fill,inner sep=1.5pt]{};
\node [below] at (13,0) {$0^411s$};
\node[circle,draw] (c) at (13,0.18){};
\node [above] at (13,0.35) {$d$};
\node [right] at (13.5,0) {$\mathfrak{U}_{00001}$};
\end{tikzpicture}
 \caption{The finite subgraph of type $\mathfrak{A}_{000001}$}
 \label{fig:5-0}
 \end{figure}

Recall that $\mathfrak{O}$ is the orbital graph obtained by the action of $\langle a, \alpha, d \rangle$ on the orbit of~$\overline{0}$ and $\mathfrak{T} = \mathfrak{T}_0 \cup \mathfrak{T}_1$ is the main trunk of $\mathfrak{O}$. Denote by $\mathfrak{G}$ the difference $\mathfrak{O} \backslash \mathfrak{T}$.

\begin{prop}\label{prop:maximal-trunk}
    The  difference $\mathfrak{G}$, of the orbital graph~$\mathfrak{O}$ minus the main trunk~$\mathfrak{T}$, is a forest where each tree is finite.
\end{prop}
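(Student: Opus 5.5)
Since $\mathfrak{T}_0$ and $\mathfrak{T}_1$ are both obtained from the orbit of $\overline{0}$ and share only the vertex $\overline{0}$ (Lemmas~\ref{lem:vertices-in-picture} and~\ref{lem:vertices-in-picture-1}), every vertex of $\mathfrak{O}$ is joined to the trunk $\mathfrak{T}$ by some path. The plan is to decompose $\mathfrak{G}=\mathfrak{O}\backslash\mathfrak{T}$ into pieces hanging off the trunk and then apply Lemma~\ref{lem:finite-branches}. For every vertex $v\notin\mathfrak{T}$ of $\mathfrak{O}$, take a shortest path from $v$ to $\mathfrak{T}$. Its last edge leaves a trunk vertex $s$. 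If $s$ lies on $\mathfrak{T}_0$, this edge is not labelled $a$ or $d$, since those edges stay inside $\mathfrak{T}_0$, so it must be an $\alpha$-edge. Similarly, if $s$ lies on $\mathfrak{T}_1$, the edge is an $a$-edge. The vertex $s=\overline{0}$ does not occur: both $a(\overline{0})$ and $\alpha(\overline{0})$ lie on the trunk, and $d$ fixes $\overline{0}$. So every component of $\mathfrak{G}$ lies inside one of the subgraphs from Lemma~\ref{lem:finite-branches}, namely the one rooted at some $s\ne\overline{0}$ and starting with the edge labelled $\alpha$ (respectively $a$).

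By Lemma~\ref{lem:finite-branches}, each such subgraph is finite and meets $\mathfrak{T}$ only in $s$. Therefore every connected component of $\mathfrak{G}$ is finite. The same lemma also shows that a component cannot be attached to two different trunk vertices.

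It remains to show that each component contains no cycles, apart from the loops given by fixed points of generators. I read ``forest'' in the sense of \cite{FG}, where loops are ignored, and I expect this step to be the main obstacle. The approach is to use the recursive description from the proof of Lemma~\ref{lem:finite-branches}. Each $\mathfrak{A}_{0^{2k-1}1}$ is a ``main'' path, the $\langle a,d\rangle$-orbit segment running from $0^{2k-1}1s$ to $0^{2k-2}11s$. Its interior vertices carry pendant subgraphs of type $\mathfrak{U}_{0^{2j}1}$ with $j<k$, and it ends in a copy of $\mathfrak{U}_{0^{2k-2}1}$. The $\mathfrak{U}$ types are described analogously, with the roles of $a$ and $\alpha$ swapped.

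I would prove acyclicity by induction on $k$. Each main path is an orbit segment of a group generated by two involutions, and it contains no repeated vertices because the digit swapped alternates between two distinct positions, as in Lemmas~\ref{lem:ada} and~\ref{lem:alpha}. Each pendant subgraph is attached through a single edge at a vertex with a different number of leading zeros. Distinct pendant subgraphs are disjoint because the prefix before the changed digit determines them, just as the proof of Lemma~\ref{lem:finite-branches} argues that the vertices avoid the trunk. In the induction, a graph obtained by gluing trees along single edges with pairwise disjoint vertex sets is again a tree. The delicate point is checking disjointness of the different pendant pieces. For this I would compare the position of the first $1$ in the vertex labels, because that position is fixed within each pendant piece and differs between the pieces attached to distinct main-path vertices.
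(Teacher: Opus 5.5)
Your finiteness argument is the paper's argument made explicit. The paper's proof is only an appeal to Lemma~\ref{lem:finite-branches}, and your shortest-path reduction (the last edge into $\mathfrak{T}_0$, resp.\ $\mathfrak{T}_1$, must be an $\alpha$-edge, resp.\ $a$-edge, and $\overline{0}$ never occurs) is exactly what is needed to apply it. The paper treats acyclicity as evident from the recursive picture, so that part is your own. It breaks at exactly the step you call delicate, because the proposed invariant is false on both counts.

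First, the position of the first $1$ is not constant on a piece. The component of $\mathfrak{G}$ at the trunk vertex $001\overline{0}$ is $\{011\overline{0},111\overline{0}\}$, with first $1$ in positions $2$ and $1$. Second, the invariant does not distinguish pieces at different main-path vertices. In Figure~\ref{fig:5-0}, both $001001s$ and $001011s$ carry a copy of $\mathfrak{U}_{001}$, namely $\{011001s,111001s\}$ and $\{011011s,111011s\}$, with identical first-$1$ positions. (Likewise, the position flipped by $d$ along a main path does not alternate between two places; in Figure~\ref{fig:5-0} it runs $3,5,3$. The main path has no repeated vertex simply because an $\langle a,d\rangle$-orbit through a $d$-fixed vertex is a path.)

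What is genuinely invariant is the tail: every vertex of the piece rooted at $0^m1t$ has the form $w1t$ with $w\in X^m$. This separates pieces whose roots have the same number of leading zeros, but not in general. In Figure~\ref{fig:5-0}, the pieces at $001011s$ and at $0^411s$ both lie in $\{w11s : w\in X^4\}$. So your gluing induction still lacks its key step.

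A short way to close the gap is to count edges on finite levels. Fix $m\ge 2$ and work on $X^m$:
\begin{itemize}
\item $a$ moves every word, giving $2^{m-1}$ edges;
\item $\alpha$ moves exactly the words beginning with $0$, giving $2^{m-2}$ edges;
\item $d$ moves exactly the $2^{m-1}-2$ words $0^{p-1}10xu$ with $p\le m-2$, giving $2^{m-2}-1$ edges.
\end{itemize}
These three generators change positions $1$, $2$ and $\ge 3$ respectively, so no edge arises twice. Hence the level-$m$ Schreier graph of $\langle a,\alpha,d\rangle$ has $2^{m-1}+2^{m-2}+(2^{m-2}-1)=2^m-1$ non-loop edges. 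It is connected by Remark~\ref{rmk:5.4} and the spherical transitivity of $B$, so it is a tree.

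Now take a simple cycle in $\mathfrak{O}$, and choose $N$ so large that its finitely many vertices have distinct length-$N$ prefixes. The cycle then projects to a simple cycle in the level-$N$ graph, which is impossible. So $\mathfrak{O}$ is a tree apart from loops, $\mathfrak{G}$ is a forest, and your reduction to Lemma~\ref{lem:finite-branches} gives the finiteness of its components.
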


\begin{proof}
    This follows from the previous lemma.
\end{proof}

Next, we consider the half-line~$\mathfrak{T}_0$ of~$\mathfrak{T}$ which is induced by just the action of   $\langle a,d\rangle$ on the orbit of $\overline{0}$.
\begin{prop}
\label{pro:5.6}
There is a bijection $\zeta:\mathbb{Z}\rightarrow\mathfrak{T}_0$ given by $\zeta(n)=(ad)^n\cdot \overline{0}$.
\end{prop}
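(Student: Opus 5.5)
The plan is to use the graph structure from Lemma~\ref{lem:vertices-in-picture}. There, $\mathfrak{T}_0$ is a half-line with a single loop, labelled by $d$, at $\overline{0}$. Its edges alternate between $a$-edges and $d$-edges, because every vertex has degree~$2$ and $a$ has no fixed points. So I will enumerate the vertices of $\mathfrak{T}_0$ along the half-line as $v_0=\overline{0}, v_1, v_2, \ldots$. Since the loop at $v_0$ is the $d$-edge, the edge $v_0v_1$ is labelled $a$. Alternation then gives
\[
a\cdot v_{2k}=v_{2k+1}, \qquad d\cdot v_{2k+1}=v_{2k+2}\qquad (k\ge 0),
\]
together with $d\cdot v_0=v_0$. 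As both generators are involutions, these relations also determine the reverse moves. From Figure~\ref{fig:half-line T0} we have concretely $v_1=1\overline{0}$, $v_2=101\overline{0}$, and so on. The vertices $v_i$ are pairwise distinct, since $\mathfrak{T}_0$ is a half-line and not a cycle or a segment.

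Next I will compute the orbit of $\overline{0}$ under $ad$ and under $(ad)^{-1}=da$ by induction.

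For positive exponents: $(ad)\cdot v_0=a\cdot v_0=v_1$, and for $k\ge1$,
\[
(ad)\cdot v_{2k-1}=a\cdot v_{2k}=v_{2k+1}.
\]
Hence $(ad)^n\cdot\overline{0}=v_{2n-1}$ for all $n\ge1$.

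For non-positive exponents: $(da)\cdot v_{2k}=d\cdot v_{2k+1}=v_{2k+2}$. Hence $(ad)^{-m}\cdot\overline{0}=(da)^m\cdot\overline{0}=v_{2m}$ for all $m\ge0$.

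It follows that $\zeta$ sends the positive integers bijectively onto the odd-indexed vertices and the non-positive integers bijectively onto the even-indexed vertices. The vertices $v_i$ are distinct and exhaust $\mathfrak{T}_0$, so $\zeta$ is a bijection $\mathbb{Z}\to\mathfrak{T}_0$.

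The only delicate point is justifying the alternating labelling, in particular that no vertex other than $\overline{0}$ carries a $d$-loop. This is exactly what the proof of Lemma~\ref{lem:vertices-in-picture} establishes via Remark~\ref{rmk:5.3}: the fixed points of $d$ are $\overline{0}$ and the points $0^k11s$, and the latter do not lie on $\mathfrak{T}_0$. Once that is in hand, the rest is the routine induction above.
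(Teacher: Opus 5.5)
Your proof is correct and takes essentially the same route as the paper. Both arguments use the half-line structure of $\mathfrak{T}_0$ from Lemma~\ref{lem:vertices-in-picture}, and both identify the vertex at odd distance $2n-1$ from $\overline{0}$ with $(ad)^n\cdot\overline{0}$ and the vertex at even distance $2n$ with $(da)^n\cdot\overline{0}=(ad)^{-n}\cdot\overline{0}$. Your enumeration $v_0,v_1,\dots$ with the alternating-label induction spells out in detail what the paper reads off from Figure~\ref{fig:half-line T0}.
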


\begin{proof}
Surjectivity follows from the definition of $\mathfrak{T}_0$. Indeed, let $s\in\mathfrak{T}_0$ be a vertex on the half-line. If,  in the  graph $\mathfrak{T}_0$, the distance  from $\overline{0}$ to  $s$  is an even number, say $2n$ for some $n\ge 0$, then $s=(da)^n\cdot \overline{0}=(ad)^{-n}\cdot \overline{0}$; cf. Figure~\ref{fig:half-line T0}. Similarly, if the distance  from $\overline{0}$ to  $s$  is an odd number, say $2n-1$ for some $n\ge 1$, then $s=(ad)^n\cdot \overline{0}$.

Appealing to Figure~\ref{fig:half-line T0} again, one sees that distinct integers $n$ and $m$ correspond to distinct vertices $(ad)^n\cdot \overline{0}$  and $(ad)^m\cdot \overline{0}$. So $\zeta$ is injective as well.
\end{proof}

\begin{theorem}
Let  $B$ be the second Basilica of the Grigorchuk--Erschler group. Then for each odd $q\in\mathbb{N}$, the subgroup $\widetilde{H}(q)=\langle \alpha, (ad)^q, c_0,c_1,c_2,c_3\rangle$ is proper and dense with respect to the profinite topology.
\end{theorem}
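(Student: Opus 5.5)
Density has essentially been established already. I will only make it explicit: $B$ is generated by the involutions $a,\alpha,c_0,c_1,c_2,c_3$, and $ad$ lies in $B$. By Proposition~\ref{prop:stabilisers}, and more generally since $B/\St_B(n)$ acts faithfully on the binary tree, every $|B/\St_B(n)|$ is a power of $2$. I use the generating set $\{\alpha, ad, c_0,c_1,c_2,c_3\}$ of $B$; this set generates $B$ because $a=(ad)d^{-1}$ and $d=c_0c_1c_2c_3$. Applying Proposition~\ref{pro:4.1} with exponent $q$ on $ad$ and exponent $1$ on the others, both coprime to every power of $2$, shows that $\widetilde{H}(q)$ is dense in the congruence topology. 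By Proposition~\ref{prop:csp}, $B$ has the congruence subgroup property, so this is the profinite topology.

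Properness is the real content, and I would follow \cite[Sec.~5]{FG} using the orbital graph $\mathfrak{O}$. The plan is to construct a map $\phi$ from the $\widetilde{H}(q)$-orbit of $\overline{0}$ into a set that is visibly smaller than the $B$-orbit. I would define $\phi$ on the main trunk $\mathfrak{T}_0$ through the bijection $\zeta$ of Proposition~\ref{pro:5.6}: put $\phi(\zeta(n))=n \bmod q$, that is, the class in $\mathbb{Z}/q\mathbb{Z}$. I then extend it to all of $\mathfrak{O}$ as follows. Each vertex of $\mathfrak{T}_1$ and each vertex of the finite trees of $\mathfrak{G}$ (Proposition~\ref{prop:maximal-trunk}, Lemma~\ref{lem:finite-branches}) is attached to the trunk at a unique point, and I give it the label of the $\mathfrak{T}_0$-vertex it hangs from. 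All of $\mathfrak{T}_1$ gets the label $\phi(\overline{0})=0$, since it meets $\mathfrak{T}_0$ only at $\overline{0}$.

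The key step is to check that $\phi$ is invariant under each generator of $\widetilde{H}(q)$. For $\alpha$ and for the $c_i$ this holds because, by Remark~\ref{rmk:5.4}, each $c_i$ acts either trivially or as $d$. Moreover, edges labelled $\alpha$ or $d$ never move along $\mathfrak{T}_0$ in a way that changes the class, except for $d$-edges inside $\mathfrak{T}_0$.

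This is the delicate point. A $d$-edge on $\mathfrak{T}_0$ joins $(ad)^n\overline{0}$ to $d(ad)^n\overline{0}=(ad)^{-n-1}\cdots$, so single $d$-moves along the trunk do not respect a naive $\mathbb{Z}/q$ labelling. The right invariant is therefore not the residue $n \bmod q$. As in \cite{FG}, I would instead work with the unordered pair $\{n,-n\}$ modulo $q$, or more precisely with the orbit of $n$ under the affine involution induced by $d$ on $\mathbb{Z}$. I would then verify case by case that $\alpha$, each $c_i$ and $(ad)^{\pm q}$ preserve this class. Off-trunk edges preserve it by construction. On $\mathfrak{T}_0$, the element $(ad)^q$ shifts $n$ by $q$, and $d$ (hence each $c_i$ acting as $d$ or trivially) maps $n\mapsto -n-1$, possibly with a parity shift. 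Each of these moves preserves the chosen equivalence; the precise form of the involution should be read off from Figure~\ref{fig:half-line T0}.

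Finally, I would exhibit two points of $\mathfrak{T}_0$ in the same $B$-orbit but in different classes, for instance $\overline{0}$ and $(ad)\overline{0}=1\overline{0}$ when $q\ge3$. This shows that $\overline{0}$ and $1\overline{0}$ lie in different $\widetilde{H}(q)$-orbits, while $a$ maps one to the other. Hence $a\notin \widetilde{H}(q)$ and the subgroup is proper. For $q=1$ the subgroup is all of $B$, so the statement should be read for odd $q\ge3$.

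The main obstacle is pinning down exactly how $d$ and the $c_i$ act on trunk indices, so that the invariant class is genuinely preserved. This includes showing that the $c_i$ which act trivially at some trunk points, rather than as $d$, do not break invariance, which I would check using the explicit recursive descriptions of $c_0,\dots,c_3$.
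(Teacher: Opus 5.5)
Your density argument is the paper's: Proposition~\ref{pro:4.1} applied to the generating set $\{\alpha,ad,c_0,c_1,c_2,c_3\}$, whose level quotients are $2$-groups, together with Proposition~\ref{prop:csp}. Your properness strategy is also the paper's. The paper shows that the $\widetilde{H}(q)$-orbit of $\overline{0}$ meets $\mathfrak{T}_0$ only in $\zeta(q\mathbb{Z})$, discarding the $\alpha$-excursions off the trunk via Lemma~\ref{lem:finite-branches}. Your labelling, constant on each component of $\mathfrak{O}\setminus\mathfrak{T}_0$ and equal to the label of the trunk vertex it hangs from, is a tidier way of making that step precise. You are also right that $q=1$ must be excluded, since $\widetilde{H}(1)=B$.

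The step you call the main obstacle, however, is stated incorrectly, and as written it is incompatible with your own invariant. If $d$ acted by $n\mapsto -n-1$, the class $\{n,-n\} \bmod q$ would not be preserved. In fact this step is a one-line computation. Since $d=c_0c_1c_2c_3$ is an involution (Lemma~\ref{lem:relations}) and $a^2=1$, one has $d(ad)^n=(ad)^{-n}d$. Since $d\cdot\overline{0}=\overline{0}$, this gives $d\cdot\zeta(n)=\zeta(-n)$; for instance, $d$ swaps $\zeta(1)=1\overline{0}$ and $\zeta(-1)=101\overline{0}$. So on trunk indices $d$ is $n\mapsto -n$, and $(ad)^{\pm q}$ is $n\mapsto n\pm q$. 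Each $c_i$ sends $\zeta(n)$ to $\zeta(n)$ or $\zeta(-n)$, by Remark~\ref{rmk:5.4}. All of these preserve $\{\pm n\} \bmod q$, the indices in the class of $0$ form $q\mathbb{Z}$, and $a\cdot\overline{0}=\zeta(1)$ is not reached when $q\ge 3$.

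Finally, ``by construction'' is not the right justification for $(ad)^{\pm q}$ off the trunk, since $(ad)^{\pm q}$ is not a single edge of $\mathfrak{O}$. The actual reason is that $\mathfrak{T}_0$ is a full $\langle a,d\rangle$-orbit. An $(ad)^{\pm q}$-move starting off $\mathfrak{T}_0$ therefore follows $a$- and $d$-edges that never touch $\mathfrak{T}_0$, and so stays in the same hanging component.
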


\begin{proof}
The fact that $\widetilde{H}(q)$ is dense was established before, so we just need to show that it is proper.  By Remark~\ref{rmk:5.4}, the orbit of $\overline{0}$ under the action of $\widetilde{H}(q)$ on $\mathfrak{T}_0$ is the same as the orbit of $\overline{0}$ under the action of $\langle \alpha, (ad)^q,d\rangle$. Similar to \cite[Prop.~5.6]{FG}, we consider the bijection $\zeta:\mathbb{Z}\rightarrow \mathfrak{T}_0$. As in \cite[Proof of Thm.~5.1]{FG}, the bijection $\zeta$ allows us to define an action of $\langle a,d\rangle$ on $\mathbb{Z}$ via
\[
g\cdot n:=\zeta^{-1}(g\cdot \zeta(n)),
\]
where $g\in \langle a,d\rangle$ and $n\in\mathbb{Z}$. Then we observe  that the orbit of $\overline{0}$ under the action of $\langle  (ad)^q, d\rangle$ corresponds to $q\mathbb{Z}$ under $\zeta$. In light of Lemma~\ref{lem:finite-branches}(i), the action of $\alpha$  on $\mathfrak{T}_0$ can be ignored; indeed, including the action of $\alpha$ on the vertices of $\mathfrak{T}_0$ either results in loops, or finite subgraphs departing from $\mathfrak{T}_0$, or we land in $\mathfrak{T}_1$ at $01\overline{0}$. Recall that the orbit of $\overline{0}$ under the action of $\widetilde{H}(q)$ on $\mathfrak{T}_0$ is the same as the orbit of $\overline{0}$ under the action of $\langle \alpha, (ad)^q,d\rangle$.  As the orbit of $\overline{0}$ under the action of  $\langle  (ad)^q,d\rangle$ corresponds to $q\mathbb{Z}$ under $\zeta$, and as noted above, the action of $\langle \alpha, (ad)^q,d\rangle$ on the orbit of $\overline{0}$ restricted to $\mathfrak{T}_0$ is the same as the action of $\langle  (ad)^q,d\rangle$,
it then follows that $\widetilde{H}(q)$ is a proper subgroup of $B$.
\end{proof}

\begin{coro}
Let  $B$ be the second Basilica of the Grigorchuk--Erschler group. Then $B$ has at least countably infinitely many distinct maximal subgroups of infinite index.
\end{coro}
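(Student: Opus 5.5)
The plan is the standard Pervova--Francoeur argument. It combines the preceding theorem (each $\widetilde{H}(q)$, $q$ odd, is a proper dense subgroup of $B$) with finite generation of $B$ and with the orbit computation used to prove properness.

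\emph{Step 1: each $\widetilde{H}(q)$ lies in a maximal subgroup of infinite index.} Since $B$ is finitely generated (by $a,\alpha,c_0,\dots,c_3$), Zorn's lemma shows that every proper subgroup of $B$ lies in a maximal subgroup. Let $M_q$ be a maximal subgroup containing $\widetilde{H}(q)$. Suppose $M_q$ had finite index. Its normal core would then be a finite-index normal subgroup $N$ of $B$. By Proposition~\ref{prop:csp}, $N$ contains some $\St_B(n)$, so $M_q\supseteq \St_B(n)$. Density of $\widetilde{H}(q)$ means $\widetilde{H}(q)\St_B(n)=B$, and hence $M_q=B$, a contradiction. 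So every $M_q$ has infinite index.

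\emph{Step 2: infinitely many distinct $M_q$.} I would take distinct odd primes $q_1\neq q_2$ and show that $\langle \widetilde{H}(q_1),\widetilde{H}(q_2)\rangle=B$. Since $\gcd(q_1,q_2)=1$, there are integers $u,v$ with $uq_1+vq_2=1$, so $(ad)^{q_1}$ and $(ad)^{q_2}$ together generate $ad$. Therefore $\langle \widetilde{H}(q_1),\widetilde{H}(q_2)\rangle$ contains $ad$ and $d$, hence $a$, together with $\alpha$ and $c_0,\dots,c_3$. That is all of $B$. Consequently a single maximal subgroup cannot contain both $\widetilde{H}(q_1)$ and $\widetilde{H}(q_2)$, so $M_{q_1}\neq M_{q_2}$. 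Running over the infinitely many odd primes gives infinitely many distinct maximal subgroups of infinite index, and countably many of them are exhibited explicitly. This proves the statement as phrased: \emph{at least} countably infinitely many. (Since $B$ is countable and finitely generated, $B$ has at most countably many finitely generated subgroups, but maximal subgroups need not be finitely generated, so the "at least" is appropriate.)

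\emph{Main obstacle.} The step carrying real content is that each $\widetilde{H}(q)$ is proper, and this is already provided by the preceding theorem via the orbital graph analysis (Lemma~\ref{lem:finite-branches}, Proposition~\ref{prop:maximal-trunk}, Proposition~\ref{pro:5.6}). What remains needs care only in two places. First, Step 1 must correctly use that density in the profinite topology, together with Proposition~\ref{prop:csp}, forces $\widetilde{H}(q)$ to meet every coset of a finite-index subgroup. Second, Step 2 relies on the generation argument $\langle (ad)^{q_1},(ad)^{q_2}\rangle=\langle ad\rangle$. That is a one-line Bézout argument in the cyclic group $\langle ad\rangle$, and it holds whether $ad$ has finite or infinite order. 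The orbit picture also confirms that $ad$ acts as a translation-like bijection on $\mathfrak{T}_0\cong\mathbb{Z}$, so it has infinite order.
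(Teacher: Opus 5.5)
Your proposal is correct and follows the same route as the paper. Each $\widetilde{H}(q)$ is proper and dense, so by finite generation and the congruence subgroup property it lies in a maximal subgroup of infinite index; for coprime $q_1,q_2$ the subgroup $\langle \widetilde{H}(q_1),\widetilde{H}(q_2)\rangle$ contains $ad$ and $d$ and hence equals $B$, so these maximal subgroups are distinct. The paper states this in one line and you have supplied the details. Restricting to odd primes is also a good choice: for $q=1$ one has $\widetilde{H}(1)=B$, and the paper's phrasing (``odd $q\in\mathbb{N}$'') does not technically exclude that case.
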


\begin{proof}
This follows from the previous theorem, noting that if two odd  $q_1,q_2\in\mathbb{N}$ are coprime, then $\widetilde{H}(q_1)$ and $\widetilde{H}(q_2)$ are contained in different maximal subgroups of infinite index.
\end{proof}

\end{document}